\newcommand{\cf}{\emph{cf.}\xspace}
\newcommand{\bdmath}{\begin{dmath}}
\newcommand{\edmath}{\end{dmath}}
\newcommand{\beq}{\begin{equation}}
\newcommand{\eeq}{\end{equation}}
\newcommand{\bdm}{\begin{displaymath}}
\newcommand{\edm}{\end{displaymath}}
\newcommand{\bea}{\begin{eqnarray}}
\newcommand{\eea}{\end{eqnarray}}
\newcommand{\beal}{\beq \begin{array}{ll}}
\newcommand{\eeal}{\end{array} \eeq}
\newcommand{\beas}{\begin{eqnarray*}}
\newcommand{\eeas}{\end{eqnarray*}}
\newcommand{\ba}{\begin{array}}
\newcommand{\ea}{\end{array}}
\newcommand{\bit}{\begin{itemize}}
\newcommand{\eit}{\end{itemize}}
\newcommand{\ben}{\begin{enumerate}}
\newcommand{\een}{\end{enumerate}}
\newcommand{\calA}{{\cal A}}
\newcommand{\calO}{{\cal O}}
\newcommand{\hide}[1]{}
\newcommand{\hiddenText}{{\color{gray} hidden text.}}
\newcommand{\hideWithText}[1]{\hiddenText}
\newcommand{\subject}{\text{ subject to }}
\DeclareMathOperator*{\argmin}{arg\,min}
\newcommand{\norm}[1]{\left\| #1 \right\|}
\newcommand{\tran}{^{\mathsf{T}}}
\newcommand{\Real}[1]{ { {\mathbb R}^{#1} } }
\newcommand{\blue}[1]{{\color{blue}#1}}
\newcommand{\linkToPdf}[1]{\href{#1}{\blue{(pdf)}}}
\newcommand{\linkToPpt}[1]{\href{#1}{\blue{(ppt)}}}
\newcommand{\linkToCode}[1]{\href{#1}{\blue{(code)}}}
\newcommand{\linkToWeb}[1]{\href{#1}{\blue{(web)}}}
\newcommand{\linkToVideo}[1]{\href{#1}{\blue{(video)}}}
\newcommand{\linkToMedia}[1]{\href{#1}{\blue{(media)}}}
\newcommand{\award}[1]{\xspace} 
\newcommand{\Sn}{\mathbb{S}^n}
\renewcommand{\norm}[1]{\left\lVert #1 \right\rVert}
\newcommand{\inprod}[2]{\left\langle #1, #2 \right\rangle}
\newcommand{\sym}[1]{\mathbb{S}^{#1}}
\newcommand{\bmat}{\left[ \begin{array}}
\newcommand{\emat}{\end{array}\right]}
\newcommand{\psd}[1]{\sym{#1}_{+}}
\newcommand{\abs}[1]{\left|#1\right|}
\newcommand{\PiSnp}[1]{\Pi_{\psd{n}} (#1)}
\newcommand{\sign}{f_\mathsf{sign}}
\newcommand{\normF}[1]{\norm{#1}_\mathsf{F}}
\newcommand{\normtwo}[1]{\norm{#1}_2}
\newcommand{\frelu}{f_\mathsf{ReLU}}
\DeclareDocumentCommand{\diag}{o}{
  \IfNoValueTF{#1}{\mathrm{diag}}{\mathrm{diag} \left( #1 \right)}
}
\newcommand{\cusolver}{\texttt{cuSOLVER}}
\newcommand{\odd}{\mathsf{odd}}
\newcommand{\float}{\mathsf{float}}
\DeclareDocumentCommand{\fstar}{o}{
  \IfNoValueTF{#1}{f^\star}{f^\star_{#1}}
}
\DeclareDocumentCommand{\ftstar}{o}{
  \IfNoValueTF{#1}{\tilde{f}^\star}{\tilde{f}^\star_{#1}}
}
\newcommand{\Remez}{\mathrm{Remez}}
\newcommand{\doubleprec}{\texttt{FP64}}
\newcommand{\singleprec}{\texttt{FP32}}
\newcommand{\halfprec}{\texttt{FP16}}
\newcommand{\Asdp}{\calA}
\newcommand{\AsdpT}{\calA^*}
\newcommand{\Symp}[1]{\mathbb{S}_{+}^{#1}}
\newcommand{\Xk}{X^{(k)}}
\newcommand{\Sk}{S^{(k)}}
\newcommand{\Xkpo}{X^{(k+1)}}
\newcommand{\ykpo}{y^{(k+1)}}
\newcommand{\Skpo}{S^{(k+1)}}
\newcommand{\CSdouble}{\cusolver{} \doubleprec}
\newcommand{\CSsingle}{\cusolver{} \singleprec}
\newcommand{\NSsingle}{\texttt{Newton-Schulz} \singleprec}
\newcommand{\NShalf}{\texttt{Newton-Schulz} \halfprec}
\newcommand{\PEsingle}{\texttt{Polar} \texttt{Express} \singleprec}
\newcommand{\PEhalf}{\texttt{Polar} \texttt{Express} \halfprec}
\newcommand{\CPsingle}{\texttt{Composite} \singleprec}
\newcommand{\CPhalf}{\texttt{Composite} \halfprec}
\newcommand{\rebuttal}[1]{#1}
\crefname{assumption}{Assumption}{Assumptions}
\theoremstyle{thmstyleone}%
\newtheorem{theorem}{Theorem}%
\theoremstyle{thmstyletwo}%
\newtheorem{remark}{Remark}%
\theoremstyle{thmstylethree}%
\newtheorem{definition}{Definition}%
\title{Factorization-free Orthogonal Projection onto the Positive Semidefinite Cone with Composite Polynomial Filtering}
\author{%
Shucheng Kang\thanks{School of Engineering and Applied Sciences, Harvard University. \texttt{\{skang1, hyhan, hankyang\}@g.harvard.edu}}%
\and Haoyu Han\footnotemark[1]%
\and Antoine Groudiev\footnotemark[1]\hspace{1.5mm}\thanks{Computer Science Department, Ecole Normale Supérieure, PSL University. \texttt{antoine.groudiev@ens.psl.eu}}%
\and Heng Yang\footnotemark[1]\hspace{1.5mm}\thanks{NVIDIA Corporation}%
}
\date{\today}
\begin{document}

\maketitle

\begin{abstract}
    We propose a factorization-free method for orthogonal projection onto the positive semidefinite (PSD) cone, leveraging composite polynomial filtering. Inspired by recent advances in homomorphic encryption, our approach approximates the PSD cone projection operator using a carefully optimized composite polynomial evaluated exclusively via matrix-matrix multiplications. This approach enables efficient GPU implementations with low-precision arithmetic, significantly outperforming the classical PSD cone projection using state-of-the-art GPU-based eigenvalue decomposition solvers. Specifically, our method achieves a consistent relative error of $10^{-3}$ in half-precision arithmetic with only 22 matrix-matrix multiplications, providing roughly a $10\times$ speed-up over NVIDIA's \cusolver{} routines on various large-scale matrices. In single-precision arithmetic with emulation on B200 GPUs, our approach maintains competitive accuracy while achieving up to a $2\times$ speed-up. Consequently, for a $10,000 \times 10,000$ dense symmetric matrix, our method requires approximately $55$ ms in half-precision and $400$ ms in single-precision arithmetic on B200 GPUs. Integration into a first-order semidefinite programming solver confirms that our low-precision projections reliably yield solutions of moderate accuracy. 
    
\end{abstract}


\section{Introduction}
\label{sec:intro}

Consider the classical problem of orthogonally projecting a symmetric matrix \(X\) onto the positive semidefinite (PSD) cone:
\begin{equation}
    \label{eq:intro:psdcone-proj}
    \PiSnp{X} \;:=\; \argmin_{Y \in \psd{n}} \,\frac{1}{2}\,\normF{Y - X}^2,
\end{equation}
where \(\Sn\) denotes the space of \(n \times n\) real symmetric matrices, \(\psd{n} := \{A \in \Sn \,\mid\, A \succeq 0\}\) is the PSD cone, and \(\normF{\cdot}\) is the Frobenius norm.  
Computing this projection---yielding the nearest PSD approximation of \(X\)---is a fundamental primitive in numerous fields, including optimization \cite{wen10mpc-admmsdp,kang25arxiv-admm}, signal processing \cite{yang23eurasip-structured-covariance-estimation}, finance \cite{higham02ima-nearest-correlation-finance}, machine learning \cite{weinberger09jmlr-distance-metric-learning}, and quantum mechanics \cite{drusvyatskiy15qip-projection-quantum-channel-construction,barberarodriguez25prr-projective-methods-quantum-process}. In particular, it is invoked repeatedly to enforce feasibility in first-order algorithms for large-scale semidefinite programming (SDP) \cite{zheng17ifac-cdcs-sdpsolver,odonoghue23-scs-sdpsolver,kang24wafr-strom,yang23mp-stride,yang15mpc-sdpnalplus-sdpsolver,garstka21jota-cosmo}.

\paragraph{Classical factorization-based solution.}
Suppose the spectral decomposition of $X$ is given by
\begin{equation}\label{eq:X-spectral-decomposition}
    X = Q \Lambda Q\tran,
\end{equation}
where $\Lambda = \diag(\lambda_{1}, \dots, \lambda_{n})$ is a diagonal matrix containing the $n$ real eigenvalues of $X$, and $Q \in \Real{n \times n}$ is an orthonormal matrix whose columns are the corresponding eigenvectors. Higham \cite{higham88laa-computing-psdcone-projection} gave a closed-form solution to \eqref{eq:intro:psdcone-proj} as 
\begin{align}
    \label{eq:intro:closed-form}
    \PiSnp{X} = Q \diag[\max\{\lambda_1,0\}, \dots, \max\{\lambda_n,0\}] Q\tran.
\end{align}
Essentially, projecting \(X\) onto the PSD cone amounts to zeroing out its negative eigenvalues and corresponding eigenvectors. This insight naturally leads to using the eigenvalue decomposition (EVD) as the standard method for PSD cone projection. Due to its robustness and numerical reliability, full EVD has become the default routine for PSD cone projection and widely used in first-order SDP optimizers \cite{wen10mpc-admmsdp,weinberger09jmlr-distance-metric-learning,odonoghue23-scs-sdpsolver,yang23eurasip-structured-covariance-estimation,kang24wafr-strom}.

However, recent advances in numerical optimization---particularly first-order methods for solving large-scale SDPs derived from convex relaxations~\cite{yang23mp-stride,yang15mpc-sdpnalplus-sdpsolver,kang24wafr-strom,kang25rss-spot,magron23book-sparse,klep24mp-state,wang24prx-certifying,han25rss-xm}---require repeated projections onto the PSD cone for dense matrices with dimensions in the \emph{tens of thousands} (i.e., $n \geq 10,000$ in \eqref{eq:intro:psdcone-proj}). As a result, classical projection via full EVD has become suboptimal for two reasons. 
First, the tridiagonalization phase of QR-factorization-based EVD is memory-bound \cite{haidar12-magma,ding18mp-spectral-operator},
limiting the ability to leverage high-throughput modern hardware such as the graphics processing unit (GPU) for acceleration. Second, high-precision projections (e.g., 64-bit machine precision) can be unnecessary in first-order methods, where standard termination criteria only require the Karush-Kuhn-Tucker (KKT) residual to fall below \(10^{-2} \sim 10^{-4}\)~\cite{odonoghue23-scs-sdpsolver,zheng17ifac-cdcs-sdpsolver,kang24wafr-strom}. Yet implementing EVD in lower precision (e.g., half precision on GPU Tensor Cores) remains difficult due to the strict orthogonality requirements intrinsic to factorization-based algorithms~\cite{yang21sisc-rounding-error-householder}. Therefore, the central motivation of this paper is:
\vspace{2mm}
\begin{quotation}
    \textit{Can we design an efficient, factorization-free algorithm for PSD cone projection that is compatible with low-precision arithmetic? Moreover, how much speedup can such an approach achieve compared to state-of-the-art GPU-based eigenvalue-decomposition solvers?}
\end{quotation}
\vspace{2mm}

\paragraph{Factorization-free solution via polynomial filtering.}
A key step toward a factorization-free approach for PSD cone projection is recognizing that the closed-form solution~\eqref{eq:intro:closed-form} can be expressed as a \emph{spectral operator} applied to \(X\). Specifically, a spectral operator \(F(X): \Sn \to \Sn\) is said to be generated by a scalar function \(f(x): \mathbb{R} \to \mathbb{R}\) if $F(X)$ is equivalent to applying $f(x)$ to the eigenvalues of $X$~\cite{ding18mp-spectral-operator}, namely,
\begin{equation}
    \label{eq:spectral-operator}
    F(X) = Q \diag(f(\lambda_1), \dots, f(\lambda_n)) Q\tran,
\end{equation}
where \(X\) admits the spectral decomposition defined in~\eqref{eq:X-spectral-decomposition}. Under this framework, the PSD projection \(\PiSnp{X}\) corresponds to the spectral operator generated by the Rectified Linear Unit (ReLU): \(\frelu(x) = \max\{x, 0\}\).

\rebuttal{
    Building on this observation, one can approximate $\frelu(x)$ by a polynomial $p(x)$. Throughout the paper, unless stated otherwise, scalar approximation errors are measured in the uniform norm on the scaled interval $[-1,1]$. This is the relevant interval because, before applying the polynomial filter to a matrix, we rescale the matrix by an upper bound on its spectral norm so that all eigenvalues of the scaled matrix lie in $[-1,1]$. 
}
The advantage of a polynomial approximation is that it avoids explicit eigenvalue decomposition:
\begin{equation}
    \label{eq:polynomial-filter-overview}
    \PiSnp{X} = Q \diag[\frelu(\lambda_1), \dots, \frelu(\lambda_n)] Q\tran
    \approx Q \diag[p(\lambda_1), \dots, p(\lambda_n)] Q\tran = p(X),
\end{equation}
where the final equality follows from the fact that for any integer \(d \in \mathbb{N}\),
\[
\resizebox{\textwidth}{!}{$
X^d = \underbrace{(Q \diag[\lambda_1, \dots, \lambda_n] Q\tran)(Q \diag[\lambda_1, \dots, \lambda_n] Q\tran)\cdots(Q \diag[\lambda_1, \dots, \lambda_n] Q\tran)}_{d \text{ times}}  = Q \diag[\lambda_1^d, \dots, \lambda_n^d] Q\tran.
$}
\]
Thus, a good polynomial approximation \(p(x) \approx \frelu(x)\) enables PSD projection purely through general matrix-matrix multiplications (GEMM), making the method highly amenable to GPU acceleration \cite{nvidiaA100,nvidia24blog-cublas}. This technique is commonly referred to as \emph{polynomial filtering}~\cite{francisco17laa-fixedpoint-approximate-psdcone}.
Under this paradigm, the central challenge becomes:

\vspace{2mm}
\begin{quotation}
    \textit{Given a fixed budget of GEMMs, how to construct an optimal polynomial \(p(x)\) that best approximates \(\frelu(x)\), and how to efficiently implement the resulting polynomial filter on GPUs?}
\end{quotation}
\vspace{2mm}

\rebuttal{
    If $p(x)$ is allowed to range over the full degree-$d$ polynomial space, then the scalar minimax approximation problem for $\frelu$ on $[-1,1]$ can be solved offline by classical Chebyshev/Remez exchange methods \cite[Theorem 10.1]{trefethen19book-approximation}. However, this scalar preprocessing efficiency should not be confused with fast matrix evaluation. Because $\frelu(x)=(x+\abs{x})/2$ has a kink at the origin, the best degree-$d$ uniform polynomial approximation error on $[-1,1]$ is only of order $1/d$. Therefore, high accuracy requires a high degree $d$, and evaluating an unstructured degree-$d$ polynomial on a dense matrix generally requires $\calO(d)$ GEMMs. To compete with EVD in runtime, we instead seek a structured high-degree polynomial that achieves a better trade-off between approximation quality and matrix-multiplication cost.
}

\paragraph{Contribution.}
Inspired by recent work in homomorphic encryption on approximating the \emph{sign function}~\cite{lee22tdsc-minimax-approximation-sign}, we propose a \emph{composite polynomial filter} for approximating $\frelu(x)$. The idea is to express the approximation as a composition of $T$ low-degree polynomials:
\begin{equation}
    \label{eq:composite-polynomial-filter}
    p(x) = f_T \circ f_{T-1} \circ \dots \circ f_1(x),
\end{equation}
where each $f_i$, for $i \in [T]$, has degree at most $d_i$. \rebuttal{The composite form is motivated by matrix-evaluation cost, not by a better approximation rate in the classical degree-counting sense. Although $p$ can have total degree as large as $\prod_{i=1}^T d_i$, we evaluate $p(X)$ by applying the low-degree filters $f_1,\ldots,f_T$ sequentially, without expanding the composition. This requires roughly $\sum_{i=1}^T d_i$ GEMMs rather than $\prod_{i=1}^T d_i$ GEMMs. Thus, for dense matrix inputs, the relevant budget is the number of GEMMs, not only the formal scalar degree.} This makes it possible to realize high-degree polynomial filters at low matrix-multiplication cost, which is well suited for GPU implementation.

We contribute (\emph{i}) an algorithmic framework for optimizing composite polynomial coefficients to minimize the worst-case approximation error of $\frelu(x)$, and (\emph{ii}) a high-performance GPU implementation of the resulting filter that outperforms classical factorization-based PSD projection. Specifically:

\begin{enumerate}
    \item \textbf{Optimization of polynomial coefficients.}  
    To minimize the worst-case approximation error under a fixed budget of GEMMs, we introduce a two-stage framework for designing the composite polynomial filter:
    \begin{enumerate}
        \item[(i)] \emph{Minimax initialization.}  
        We construct a sequence of composite polynomials that approximate the matrix sign function---a surrogate for the ReLU function. The coefficients of each low-degree component are computed via the Remez algorithm~\cite{remez62ssr-remez}, following the approach in~\cite{lee22tdsc-minimax-approximation-sign}. The obtained composite polynomials are proven to be minimax optimal for the sign function under mild conditions.
        
        \item[(ii)] \emph{Gradient-based refinement.}  
        Since the minimax approximation is tailored to the sign function and not directly to $\frelu(x)$, we further refine the coefficients using gradient descent. This step treats the polynomial coefficients as unconstrained, non-convex optimization variables, directly targeting approximation of \(\frelu(x)\). The refinement further reduces error beyond the initial minimax stage.
    \end{enumerate}
    As a result of the two-stage optimization, our method achieves a lower approximation error with the same number of GEMMs compared to prior polynomial filtering approaches for PSD projection~\cite{francisco17laa-fixedpoint-approximate-psdcone}.

    \item \textbf{GPU implementation and performance evaluation.}  
    We implemented the composite polynomial filter in native C++ and CUDA, leveraging Tensor Cores and \texttt{BF16x9} emulation techniques~\cite{cublas129} to achieve high computational efficiency. We benchmarked our method on both NVIDIA B200 and NVIDIA H100 GPUs, comparing its performance against the commercial \cusolver{} library~\cite{cusolver} and previous polynomial filtering approaches~\cite{amsel25arxiv-polar-express,francisco17laa-fixedpoint-approximate-psdcone}. 
    In half-precision arithmetic, our filter consistently attains a relative error of approximately $10^{-3}$ using only $22$ GEMM operations across diverse large-scale matrix datasets. This results in a stable speed-up of approximately $10\times$ compared to \cusolver's eigenvalue decomposition (EVD). For single-precision arithmetic, with emulation enabled on the B200 GPU, our polynomial filter achieves up to $2\times$ speed-up over \cusolver's single-precision EVD implementation while utilizing $31$ GEMM launches, and it maintains comparable accuracy.
    Furthermore, when integrated into a first-order SDP solver, our lower-accuracy projection method remains effective, allowing the solver to reliably converge to solutions of moderate accuracy.

\end{enumerate}

Our PSD cone projection package based on composite polynomial filtering is open-sourced at

\vspace{2mm}
\begin{center}
    \url{https://github.com/ComputationalRobotics/psd_projection}.
\end{center}
\vspace{2mm}

\paragraph{Outline.} Related work is reviewed in Section~\ref{sec:related-work}. Section~\ref{sec:comp} introduces our composite polynomial filter design framework. Implementation details and experimental results are presented in Section~\ref{sec:evaluation}, followed by conclusions in Section~\ref{sec:conclusion}.


\section{Related Work}
\label{sec:related-work}

\paragraph{PSD cone projection via eigenvalue decomposition.} Eigenvalue decomposition is often employed to obtain $Q$ and $\{ \lambda_i \}_{i=1}^n$ in~\eqref{eq:intro:closed-form}. In general, the full decomposition of an $n \times n$ dense symmetric matrix $X$ has computational complexity $\mathcal{O}(n^3)$ and memory complexity $\mathcal{O}(n^2)$ and is widely implemented across various computing architectures~\cite{anderson99siam-lapack,cusolver,tomov10pc-magma}. Almost all existing first-order SDP solvers rely on full eigenvalue decomposition for PSD cone projection~\cite{zheng17ifac-cdcs-sdpsolver,odonoghue23-scs-sdpsolver,kang24wafr-strom,yang15mpc-sdpnalplus-sdpsolver,garstka21jota-cosmo}. When either the positive or negative eigenvalue component is low-rank, PSD cone projection can be accelerated using low-rank eigenvalue decomposition algorithms~\cite{rontsis22jota-approximate-admm}. Sketching- and randomization-based methods for approximate large-scale PSD cone projection also exist~\cite{jones24arxiv-random-psdcone}, but their convergence guarantees hold only in a probabilistic sense. All these methods explicitly rely on knowledge of (at least some) eigenvalue-eigenvector pairs.

\rebuttal{
    \paragraph{Projection methods for conic optimization.}
    Projection operators are central in conic feasibility and conic optimization algorithms, such as projection-based approaches for semidefinite least-squares problems~\cite{malick04simaa-dual-sdls}, conic feasibility and polynomial sum-of-squares decompositions~\cite{henrion11oms-conic-feasibility-sos}, and regularization methods for semidefinite programming~\cite{malick09siopt-regularization-sdp}; see also~\cite{henrion12handbook-projection-conic}. Our work is complementary: we focus on accelerating the elementary projection onto $\mathbb{S}^n_+$ itself, replacing spectral decomposition with a factorization-free composite polynomial filter evaluated by dense matrix-matrix multiplications.
}
    
\paragraph{Matrix sign function approximation via polynomial/rational fixed point iteration.} Let $X$ admit the spectral decomposition in \eqref{eq:X-spectral-decomposition}, the matrix sign function is defined as \cite{denman76amc-matrix-sign}
\begin{equation}\label{eq:matrix-sign}
    \sign(X):=Q \diag[\sign(\lambda_1),\dots,\sign(\lambda_n)] Q\tran,
\end{equation}
where $\sign(\cdot)$ is defined in \eqref{eq:comp:sign}. A substantial body of literature has developed polynomial- and rational-based fixed-point schemes for computing the matrix sign function, including the Newton iteration~\cite{higham08book-functions-matrices,higham86jssc-polar-decomposition}, the inverse Newton iteration~\cite{nakatsukasa10simaa-halley-polar}, and several Newton--Schulz variants~\cite{chen14anl-stable-newton-schulz,bjorc71sina-iterative-estimate-orthogonal}. These methods involve only matrix-matrix multiplications or matrix inversions, making them more suitable for large-scale parallel architectures where the computational and communication overhead of full eigenvalue decomposition quickly becomes prohibitive. In the context of PSD cone projection, \cite{francisco17laa-fixedpoint-approximate-psdcone} proposed rewriting~\eqref{eq:intro:closed-form}---the spectral operator generated by $\frelu(x)$---as $0.5 X (I + \sign(X))$ and employed a variant of the Newton-Schulz iteration to approximate the matrix sign function, primarily targeting PSD projection of sparse, banded matrices.
However, while fixed-point iterations can achieve local quadratic convergence given knowledge of the smallest absolute eigenvalue~\cite{francisco17laa-fixedpoint-approximate-psdcone}, in practice, achieving even moderate accuracy typically requires many iterations~\cite{chen14anl-stable-newton-schulz}, thus diminishing their speed advantage relative to full eigenvalue decomposition. In optimization of neural networks, \cite{amsel25arxiv-polar-express} recently employed a sequence of degree-5 minimax composite polynomials to approximate the polar factor (a generalization of matrix sign function) at low accuracy. Nevertheless, the GPU-level competitiveness of such composite polynomial filtering, relative to state-of-the-art factorization-based approaches, remains unclear.

\paragraph{Background for GPU computation.} GPU math libraries organize kernels using the BLAS hierarchy:
Level-2 routines (matrix-vector) involve $\mathcal{O}(n^2)$ floating-point operations and $\mathcal{O}(n^2)$ data movement, yielding low arithmetic intensity; Level-3 routines (matrix-matrix) perform $\mathcal{O}(n^3)$ work for the same $\mathcal{O}(n^2)$ memory traffic, achieving far higher intensity~\cite{cublas129}. Under the Roofline model\cite{williams09acm-roofline}, this means Level-2 kernels are typically memory-bound while Level-3 kernels can become compute-bound once their arithmetic intensity exceeds the machine's balance point. Empirical studies confirm that large-scale GEMM kernels on modern GPUs routinely sustain over 90\% of peak compute throughput, making them archetypal compute-bound workloads \cite{volkov08acm-benchmark-gpu-dla}. In contrast, dense eigenvalue decomposition (EVD) begins with a reduction to (banded) tridiagonal form dominated by BLAS-2 updates; because these updates are memory-bound, the end-to-end EVD pipeline is usually limited by memory bandwidth even though it contains some compute-heavy BLAS-3 sweeps. Furthermore, modern GPUs such as NVIDIA's Ampere A100 incorporate Tensor Cores---specialized mixed-precision matrix-multiply-accumulate (MMA) units that push peak throughput beyond 300 TFLOP/s in half precision and deliver orders-of-magnitude speedups for dense linear-algebra kernels compared to general CUDA cores~\cite{nvidiaA100}. This further amplifies BLAS-3 operator's advantage over BLAS-2 operator.


\section{Composite Polynomial Filter Design}
\label{sec:comp}

In this section we present a two-stage framework that efficiently approximates $\frelu(x)$ with a sequence of low-degree polynomials combined through addition, multiplication, and composition. 

\paragraph{Minimax formulation for $\frelu(x)$} Recall from Section \ref{sec:intro} that the goal is to find a composite polynomial filter, in the form of \eqref{eq:composite-polynomial-filter}, that best approximates $\frelu(x)$. To do so, we adopt the classical minimax criterion from approximation theory~\cite[Chapter 10]{trefethen19book-approximation}:
\begin{eqnarray}
    \label{eq:comp:minimax-relu}
    \inf_{f_1,\dots,f_T}\;
        \max_{x \in [-1,1]}
        & \abs{f_{T} \circ f_{T-1} \circ \cdots \circ f_{1}(x) - \frelu(x)} \\[4pt]
    \subject & f_t \in \mathbb{R}_{d_t}[x], \; t = 1,\dots,T. \nonumber
\end{eqnarray}
Here the total number of polynomials $T$ and their degrees $\{d_t\}_{t=1}^T$ are fixed in advance as the computational budget. 
The set $\mathbb{R}_{d_t}[x]$ includes real (univariate) polynomials of degree at most~$d_t$.  
Restricting $x$ to $[-1,1]$ does not result in loss of generality, as we can efficiently rescale the matrix~$X$ so that its eigenvalues lie in this interval. Since the minimax optimality under composition may not be attainable~\cite{girosi1990bc-newworks-best-approx}, we use ``$\inf$'' in~\eqref{eq:comp:minimax-relu}. Let $D:= \prod_{t=1}^T d_t$ be the degree of the composite polynomial filter.

\paragraph{Challenges} Directly solving~\eqref{eq:comp:minimax-relu} is challenging, even under the assumption that the minimax optimum of the composite structure is attainable. Classical approximation tools such as Chebyshev criteria and the Remez algorithm~\cite{remez62ssr-remez} are not applicable, as the composite polynomial $f_T \circ \cdots \circ f_1(x)$ does not span the full polynomial space $\mathbb{R}_D[x]$. While global optimization techniques---such as the two-level semidefinite relaxation hierarchy for robust polynomial optimization~\cite{lasserre2011jgo-minmax-robost-pop}---can, in principle, solve~\eqref{eq:comp:minimax-relu}, they face severe scalability limitations due to the high polynomial degrees involved.
\rebuttal{In contrast, rational approximation is attractive from a purely scalar approximation viewpoint. Rational approximants to functions with a nonsmooth point, such as $\abs{x}$ and hence $\frelu(x)=(x+\abs{x})/2$, can achieve root-exponential convergence in the degree~\cite{newman64um-rational-approximation-abs}. However, evaluating a rational filter on a matrix requires operations such as $p(X)q(X)^{-1}$, or shifted linear solves after partial-fraction expansion. These operations introduce inversions, factorizations, or iterative solves, and therefore fall outside our target model of a factorization-free, GEMM-only implementation that exploits low-precision Tensor Cores.}

\paragraph{Two-stage design strategy} To address these challenges, we adopt a two-stage design strategy to approximately solve~\eqref{eq:comp:minimax-relu}. In Stage I (Section~\ref{sec:sec:stage-1}), we construct a minimax-optimal composite polynomial approximation for the surrogate function $\sign(x)$. In Stage II (Section~\ref{sec:sec:stage-2}), we locally refine the polynomials obtained in Stage I using the true minimax objective for $\frelu(x)$ in~\eqref{eq:comp:minimax-relu}.

\subsection{Stage I: Minimax Composite Polynomials for \texorpdfstring{$\sign(x)$}{sign(x)}}
\label{sec:sec:stage-1}

\paragraph{Minimax formulation for $\sign(x)$} 
\rebuttal{For the direct composite $\frelu$ problem~\eqref{eq:comp:minimax-relu}, we are not aware of an efficient greedy algorithm that computes minimax composite coefficients with optimality guarantees. On the other hand, approximating $\sign(x)$ with polynomial compositions admits a more tractable sequential construction~\cite{chen14anl-stable-newton-schulz,lee22tdsc-minimax-approximation-sign,amsel25arxiv-polar-express,francisco17laa-fixedpoint-approximate-psdcone}. The sign surrogate has two simplifying features. First, $\sign(x)$ is odd, so it is natural to restrict the component filters to odd polynomials. Second, away from the discontinuity at the origin, approximating $\sign(x)$ reduces on the positive interval $[\epsilon,1]$ to approximating the constant function $1$.}
Consider
\begin{eqnarray}
    \label{eq:comp:minimax-sign}
    \left\{ \fstar[t] \right\}_{t=1}^T = 
    \argmin_{f_1,\dots,f_T}\;
        \max_{x \in [-1, -\epsilon] \cup [\epsilon, 1]}
        & \abs{f_{T} \circ f_{T-1} \circ \cdots \circ f_{1}(x) - \sign(x)} \\[4pt]
    \subject & f_t \in \mathbb{R}^{\odd}_{d_t}[x], \; t = 1,\dots,T, \nonumber
\end{eqnarray}
where $\epsilon \in (0, 1)$ is a tunable hyper-parameter to address the discontinuity issue when approximating $\sign(x)$, $\mathbb{R}^{\odd}_{d_t}[x]$ represents the subspace of $\mathbb{R}_{d_t}[x]$ containing all odd polynomials, and 
\begin{align}
    \label{eq:comp:sign}
    \sign(x) := \begin{cases}
        1,   & x > 0, \\[2pt]
        0.5, & x = 0, \\[2pt]
       -1,   & x < 0,
    \end{cases}
    \quad \text{with} \quad \frelu(x) = \frac{1}{2} x (1 + \sign(x)).
\end{align}
We only consider odd polynomials in~\eqref{eq:comp:minimax-sign} since $\sign(x)$ is an odd function. 
\cite[Definition 7]{lee22tdsc-minimax-approximation-sign} proposes an efficient greedy algorithm to solve~\eqref{eq:comp:minimax-sign}, presented in Algorithm~\ref{alg:comp:seq-remez}. \rebuttal{The key simplification is that the sign formulation decomposes the coupled composite design into $T$ one-dimensional Remez subproblems, one for each component polynomial, each solved over an odd-polynomial Haar space.}
In Algorithm~\ref{alg:comp:seq-remez}, \eqref{eq:comp:remez-1} holds since $\sign$ and $f$ are both odd.
$\Remez ( g, \ [a, b], \ \Phi  )$ in~\eqref{eq:comp:remez-2} is used to determine the unique minimax approximate polynomial for a continuous function $g$ over interval $[a, b]$ with basis $\Phi$. Over the iterations, $a_t$ remains positive. Thus, $\mathbb{R}^{\odd}_{d_t}[x]$ satisfies the Haar condition on $[a_t, b_t]$ and the Remez algorithm converges~\cite[Lemma A.1]{amsel25arxiv-polar-express}.  We defer the detailed discussion for the Remez algorithm to Appendix~\ref{app:remez}. The next Theorem states the correctness of Algorithm~\ref{alg:comp:seq-remez}. 




\begin{algorithm}[htbp]
    \caption{A sequential Remez algorithm to solve~\eqref{eq:comp:minimax-sign}}
    \label{alg:comp:seq-remez}
\begin{algorithmic}[1]      
    \REQUIRE Composite step $T$, degrees $\{ d_t \}_{t=1}^T$, hyperparameter $\epsilon$.
    \ENSURE  Minimax composite polynomials $\{ \fstar[t] \}_{t=1}^T$ for~\eqref{eq:comp:minimax-sign}.
    \STATE Initialization: $a_t \leftarrow \epsilon, \ b_t \leftarrow 1$.

    \FOR{$t = 1$ \textnormal{to} $T$}
        \STATE (1) Choose a basis $\Phi_t$ of $\mathbb{R}^{\odd}_{d_t}[x]$. $\fstar[t]$ is the minimax approximate polynomial of degree at most $d_t$ for $\sign$ over $[-b_t, -a_t] \cup [a_t, b_t]$:
        \STATE
        \begin{align}
            \fstar[t] := & \argmin_{f \in \mathbb{R}^{\odd}_{d_t}[x]} \max_{x \in [-b_t, -a_t] \cup [a_t, b_t]} \abs{f(x) - \sign(x)} \nonumber \\
            = & \argmin_{f \in \mathbb{R}^{\odd}_{d_t}[x]} \max_{x \in [a_t, b_t]} \abs{f(x) - \sign(x)} \label{eq:comp:remez-1} \\
            \leftarrow & \Remez \left( \sign, \ [a_t, b_t], \ \Phi_t \right). \label{eq:comp:remez-2}
        \end{align}

        \STATE (2) Set the $(t+1)$'th interval $[a_{t+1}, b_{t+1}]$ as the image of $[a_{t}, b_{t}]$ under $\fstar[t]$:
        \STATE
        \begin{align}
            \label{eq:comp:update-interval}
            [a_{t+1}, b_{t+1}] \leftarrow \fstar[t] \left( [a_{t}, b_{t}] \right).
        \end{align}
    \ENDFOR
\end{algorithmic}
\end{algorithm}

\begin{theorem}[Minimax Optimality]
    \label{thm:comp:minimax}
    Given the composite step $T$, degrees $\{ d_t \}_{t=1}^T$, and hyperparameter $\epsilon$, $\{ \fstar[t] \}_{t=1}^T$ generated by Algorithm~\ref{alg:comp:seq-remez} is minimax optimal for~\eqref{eq:comp:minimax-sign}. 
\end{theorem}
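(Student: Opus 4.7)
My approach would be induction on the composite depth $T$, exploiting two special properties of $\sign$: it is an odd function, and it acts as a fixed point under composition with any sign-preserving odd map. These should let the sequential greedy algorithm match the global minimax.

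The base case $T=1$ is immediate: by the oddness of both $\sign$ and the class $\mathbb{R}^{\odd}_{d_1}[x]$, the symmetric two-sided minimax problem on $[-1,-\epsilon]\cup[\epsilon,1]$ collapses to the one-sided problem on $[\epsilon,1]$, as already recorded in \eqref{eq:comp:remez-1}. On the single interval $[\epsilon,1]\subset(0,\infty)$, the odd polynomial basis satisfies the Haar (Chebyshev) condition, so standard Remez theory yields the unique minimax solution returned by \eqref{eq:comp:remez-2}.

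For the inductive step, I would fix an arbitrary admissible tuple $(f_1,\ldots,f_T)$ and write $g = f_{T-1}\circ\cdots\circ f_1$, which is odd. Assuming $g$ maps $[\epsilon,1]$ into $(0,\infty)$---a sign-preservation condition that I would propagate inductively from the starting hypothesis $\epsilon > 0$---the identity $\sign(g(x)) = \sign(x)$ holds on the domain, so the composite error reduces to
\[
    \max_{x \in [-1,-\epsilon]\cup[\epsilon,1]}\abs{f_T(g(x)) - \sign(x)} \;=\; \max_{y \in g([-1,-\epsilon]\cup[\epsilon,1])}\abs{f_T(y) - \sign(y)}.
\]
For $g$ fixed, the optimal $f_T$ is then the Remez approximator of $\sign$ on the image $g([\epsilon,1])$. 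It remains to argue that the greedy prefix $(\fstar[1],\ldots,\fstar[T-1])$ produces the image interval on which this Remez error is smallest; combined with the inductive optimality of the prefix, this would close the induction.

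The main obstacle is precisely that last comparison, which is a monotonicity lemma: I would need to show that the Remez error for $\sign$ on $[-b,-a]\cup[a,b]$ is a monotone function of the interval data, and that the image $[a_T,b_T]=[1-E_{T-1},\,1+E_{T-1}]$ produced by the equi-oscillating $\fstar[T-1]$ dominates, in the relevant partial order, any image achievable by a non-greedy prefix of the same degree profile. The subtlety is that a suboptimal inner polynomial might yield a narrower image interval on one side while widening it on the other; ruling this out requires combining the Chebyshev alternation characterization of $\fstar[T-1]$ with the fact that $\sign$ is constant on each half of its domain, so that image widening strictly dominates image compression in terms of downstream approximability. Carefully tracking this monotonicity, together with the sign-preservation invariant, would be the technical core of the proof.
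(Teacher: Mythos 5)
Your proposal takes a genuinely different route from the paper. The paper's proof is a short contradiction argument that treats \cite[Theorem~2]{lee22tdsc-minimax-approximation-sign} as a black box: that external theorem guarantees the existence of a sequentially-Remez-generated sequence whose error matches the infimum, and uniqueness of each Remez step under the Haar condition forces that sequence to coincide with the greedy output, producing the contradiction. You instead attempt to reconstruct the substance of the cited theorem from scratch by induction on $T$, reducing the composite error to a single Remez problem on the image of the inner prefix. That decomposition is the right one, and you have correctly isolated the technical core.

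The difficulty is that the ``monotonicity lemma'' you flag as the crux is precisely the content of the cited theorem, and you acknowledge but do not prove it, so the induction does not close. To fill it you would need two facts. First, for a fixed positive interval $[a,b]$ and any sign-preserving odd $f$ of degree at most $d$, the image $[a',b'] = f([a,b])$ satisfies $b'/a' \ge (1+e)/(1-e)$, where $e$ is the degree-$d$ Remez error of $\sign$ on $[a,b]$; a scaling argument gives this, since rescaling $f$ so that its max and min on $[a,b]$ average to $1$ turns $(b'-a')/(b'+a')$ into a valid error that must be at least $e$. Second, the Remez error of $\sign$ on $[a,b]$ depends only on the ratio $b/a$ and is monotone increasing in it, so that the ratio-minimizing greedy image at step $t$ yields the smallest achievable Remez error at step $t+1$. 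You also assume sign-preservation of a competitor prefix $g$ without justification, and the phrase ``propagate inductively from the starting hypothesis $\epsilon>0$'' is misleading: sign-preservation is automatic for the greedy sequence but not for an arbitrary admissible prefix. One needs to observe that if $g$ vanishes or changes sign on $[\epsilon,1]$ then the composite error is at least $1$, since any odd $f_T$ vanishes at the origin, while if $g$ is uniformly negative on $[\epsilon,1]$ replacing $(g, f_T)$ by $(-g, -f_T)$ yields an equivalent sign-preserving competitor.
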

Theorem~\ref{thm:comp:minimax} can be directly deduced from~\cite[Theorem 2]{lee22tdsc-minimax-approximation-sign}.~\cite[Theorem 4.1]{amsel25arxiv-polar-express} also provides an independent proof. We provide the proof here for completeness. 

\begin{proof}[Proof for Theorem~\ref{thm:comp:minimax}]
    Prove by contradiction. Denote $\{ \fstar[t] \}_{t=1}^T$ as the polynomial sequence generated by Algorithm~\ref{alg:comp:seq-remez}. Its approximation error is: 
    \[
        E^\star = \max_{x \in [\epsilon, 1]} \abs{\fstar[T] \circ \fstar[T-1] \circ \cdots \circ \fstar[1](x) - \sign(x)}.
    \] 
    Now suppose~\eqref{eq:comp:minimax-sign}'s optimal minimax value $E_{\min} < E^\star$. 
    Invoking~\cite[Theorem 2]{lee22tdsc-minimax-approximation-sign}, there exists a polynomial sequence $\{ w_t \}_{t=1}^T$ generated by Algorithm~\ref{alg:comp:seq-remez}, such that 
    \[
    E_{\min} \ge \max_{x \in [\epsilon, 1]} \abs{w_{T} \circ w_{T-1} \circ \cdots \circ w_1(x) - \sign(x)}.
    \]
    On the other hand, the uniqueness of each Remez algorithm step in~\eqref{eq:comp:remez-2} under Haar condition is guaranteed. Thus, $w_1 = \fstar[1], w_2 = \fstar[2], \dots, w_T = \fstar[T]$, which leads to the contradiction. 
\end{proof}



\subsection{Stage II: Gradient-based Refinement for \texorpdfstring{$\frelu(x)$}{f_ReLU(x)}}
\label{sec:sec:stage-2}

Given an approximation sequence $\{f_t^\star\}_{t=1}^T$ to $\sign(x)$, an approximation to $\frelu(x)$ follows immediately:
\begin{align}
    \label{eq:comp:fstar}
    \frelu(x) \;\approx\;
        \frac{1}{2}\,x\!\left(1 + \fstar[T] \circ \fstar[T-1] \circ \cdots \circ \fstar[1](x) \right)
        \;=:\;\fstar (x).
\end{align}
\rebuttal{This gives a structured initialization for the $\frelu$ problem~\eqref{eq:comp:minimax-relu}.}
However, even if $f_T^\star \circ f_{T-1}^\star \circ \cdots \circ f_1^\star(x)$ is minimax-optimal for~\eqref{eq:comp:minimax-sign}, there is no guarantee that the resulting $f^\star(x)$ is minimax-optimal for~\eqref{eq:comp:minimax-relu}. We therefore carry out an additional gradient-based local optimization of the coefficients, following the coefficient-refinement strategy of \cite{cesista2025muonoptcoeffs} and exploiting modern automatic-differentiation toolchains. Concretely, we minimize the loss function 
\begin{equation}\label{eq:loss-gradient}
    \ell(f_T,\dots,f_1):=\max_{x \in [-1, 1]} \abs{
        \frac{1}{2} x (1 + f_T \circ f_{T-1} \circ \cdots \circ f_1(x) ) - \frelu(x)},
\end{equation}
evaluating it on a dense set of sample points in the interval \([-1,1]\). The weights of \(\{f_t\}_{t=1}^T\) are initialized with the coefficients of \(\{\fstar[t]\}_{t=1}^T\) computed from Stage~I and are then progressively refined by gradient descent. Once this optimization converges, we denote the resulting composite polynomial by
\begin{align}
    \label{eq:comp:fstar-tilde}
    \ftstar(x) := \frac{1}{2}\,x\!\left(1 + \ftstar[T] \circ \ftstar[T-1] \circ \cdots \circ \ftstar[1](x) \right).
\end{align}
\rebuttal{We emphasize that Stage~II keeps the coefficients in composite form: it updates the component filters $f_1,\ldots,f_T$ without expanding their composition into a full degree-$D$ basis. This is different from directly applying Remez to $\frelu$ over the full degree-$D$ polynomial space, which would produce an unstructured polynomial whose matrix evaluation generally requires $\calO(D)$ GEMMs.}

\paragraph{Approximation error evaluation} Given an arbitrary composite function $f$, it is difficult to obtain the exact approximation error in the $\infty$-norm, $e(f) := \max_{x\in [-1, 1]} \abs{f(x) - \frelu(x)}$.
Since our implementation ultimately targets single- or even half-precision arithmetic, it is practically sufficient to evaluate the point-wise error at every single-precision representable value in $[-1,1]$.  
The complete set of such points, denoted $S_{\float}$, contains $2{,}130{,}706{,}433$ elements.  We therefore work with and evaluate the discretized error
\begin{align}
    \label{eq:comp:error-approx}
    e_{\float}(f) := \max_{x_i \in S_{\float}} \abs{f(x_i) - \frelu(x_i)} .
\end{align}
Although $e_{\float}(f)$ is merely an approximation of $e(f)$, our extensive benchmarking of PSD-cone projections (\cf \S~\ref{sec:sec:eva:psdcone}) clearly demonstrates the practical benefits of our refined coefficients.

\subsection{Explicit Formulations of Composite Polynomial Filters}

Integrating the techniques outlined above, we design two composite-polynomial variants, each tailored to single- and half-precision settings, respectively. Guided by the principles in~\cite{lee22tdsc-minimax-approximation-sign,amsel25arxiv-polar-express}, we set $d_t = 5$ for all $t$.

(1) \textit{Single precision setting.} We set $\epsilon = 10^{-3}$, $T = 10$. $\{ \fstar[t] \}_{t=1}^T$, $\{ \ftstar[t] \}_{t=1}^T$, and their corresponding $e_\float$ are listed in Table~\ref{tab:comp:single}. The final $\ftstar_{\mathsf{single}}$ involves $31$ GEMMs.


\begin{table}[htbp]
    \centering
    \begin{adjustbox}{width=\linewidth}
    \begin{tabular}{ccc}
        \toprule
         & $\fstar_{\mathsf{single}}$ & $\ftstar_{\mathsf{single}}$ \\
        \hline 
        formula &
        $\displaystyle
        \begin{aligned}
            & \fstar[1](x) = 8.5098853026 x - 25.2643041908 x^3 + 18.7535678997 x^5 \\ 
            & \fstar[2](x) = 4.2495734789 x - \phantom{2}3.1549764881 x^3 + \phantom{1}0.5858847825 x^5 \\ 
            & \fstar[3](x) = 4.2251221908 x - \phantom{2}3.1380444351 x^3 + \phantom{1}0.5839534551 x^5 \\ 
            & \fstar[4](x) = 4.1248386870 x - \phantom{2}3.0683324528 x^3 + \phantom{1}0.5760029536 x^5 \\ 
            & \fstar[5](x) = 3.7580103358 x - \phantom{2}2.8092738924 x^3 + \phantom{1}0.5464842066 x^5 \\ 
            & \fstar[6](x) = 2.8561775413 x - \phantom{2}2.1340562332 x^3 + \phantom{1}0.4701107692 x^5 \\ 
            & \fstar[7](x) = 2.0206004158 x - \phantom{2}1.4037211505 x^3 + \phantom{1}0.3906738969 x^5 \\ 
            & \fstar[8](x) = 1.8758751005 x - \phantom{2}1.2509719905 x^3 + \phantom{1}0.3750972123 x^5 \\ 
            & \fstar[9](x) = 1.8750000000 x - \phantom{2}1.2500000000 x^3 + \phantom{1}0.3750000000 x^5 \\ 
            & \!\!\fstar[10](x) = 1.8750000000 x - \phantom{2}1.2500000000 x^3 + \phantom{1}0.3750000000 x^5 
        \end{aligned}$
        & 
        $\displaystyle
        \begin{aligned}
            & \ftstar[1](x) = 8.3119043343 x - 23.0739115930 x^3 + 16.4664144722 x^5 \\ 
            & \ftstar[2](x) = 4.1439360087 x - \phantom{2}2.9176674704 x^3 + \phantom{1}0.5246212487 x^5 \\ 
            & \ftstar[3](x) = 4.0257813209 x - \phantom{2}2.9025002398 x^3 + \phantom{1}0.5334261214 x^5 \\ 
            & \ftstar[4](x) = 3.5118574347 x - \phantom{2}2.5740236523 x^3 + \phantom{1}0.5050097282 x^5 \\ 
            & \ftstar[5](x) = 2.4398158400 x - \phantom{2}1.7586675341 x^3 + \phantom{1}0.4191290613 x^5 \\ 
            & \ftstar[6](x) = 1.9779835097 x - \phantom{2}1.3337358510 x^3 + \phantom{1}0.3772169049 x^5 \\ 
            & \ftstar[7](x) = 1.9559726949 x - \phantom{2}1.3091355170 x^3 + \phantom{1}0.3746734515 x^5 \\ 
            & \ftstar[8](x) = 1.9282822454 x - \phantom{2}1.2823649693 x^3 + \phantom{1}0.3704626545 x^5 \\ 
            & \ftstar[9](x) = 1.9220135179 x - \phantom{2}1.2812524618 x^3 + \phantom{1}0.3707011753 x^5 \\ 
            & \!\!\ftstar[10](x) = 1.8942192942 x - \phantom{2}1.2613293407 x^3 + \phantom{1}0.3676616051 x^5 
        \end{aligned}$
        \\    
        \hline 
        $e_\float$ & $1.1092 \times 10^{-5}$ & $8.7023 \times 10^{-6}$ \\
        \bottomrule
    \end{tabular}
    \end{adjustbox}
    \caption{Composite polynomials for single-precision PSD cone projection.} 
    \label{tab:comp:single}
\end{table} 

(2) \textit{Half precision setting.} We set $\epsilon = 10^{-3}$, $T = 7$. $\{ \fstar[t] \}_{t=1}^T$, $\{ \ftstar[t] \}_{t=1}^T$, and their corresponding $e_\float$ are listed in Table~\ref{tab:comp:half}. The final $\ftstar_{\mathsf{half}}$ involves $22$ GEMMs.


\begin{table}[htbp]
    \centering
    \begin{adjustbox}{width=\linewidth}
    \begin{tabular}{ccc}
        \toprule
         & $\fstar_{\mathsf{half}}$ & $\ftstar_{\mathsf{half}}$ \\
        \hline 
        formula &
        $\displaystyle
        \begin{aligned}
            & \fstar[1](x) = 8.4703288038 x - 25.1080747067 x^3 + 18.6292755991 x^5 \\ 
            & \fstar[2](x) = 4.1828341833 x - \phantom{2}3.1087011099 x^3 + \phantom{1}0.5806066814 x^5 \\ 
            & \fstar[3](x) = 3.9618572790 x - \phantom{2}2.9540637464 x^3 + \phantom{1}0.5629761180 x^5 \\ 
            & \fstar[4](x) = 3.2865862170 x - \phantom{2}2.4647201345 x^3 + \phantom{1}0.5073576939 x^5 \\ 
            & \fstar[5](x) = 2.2737499945 x - \phantom{2}1.6446603679 x^3 + \phantom{1}0.4161909275 x^5 \\ 
            & \fstar[6](x) = 1.8887161973 x - \phantom{2}1.2651572253 x^3 + \phantom{1}0.3765189256 x^5 \\ 
            & \fstar[7](x) = 1.8750008858 x - \phantom{2}1.2500009843 x^3 + \phantom{1}0.3750000984 x^5
        \end{aligned}$
        & 
        $\displaystyle
        \begin{aligned}
            & \ftstar[1](x) = 8.2885332412 x - 22.5927099246 x^3 + 15.8201383114 x^5 \\ 
            & \ftstar[2](x) = 4.1666196466 x - \phantom{2}2.9679004036 x^3 + \phantom{1}0.5307623217 x^5 \\ 
            & \ftstar[3](x) = 4.0611848147 x - \phantom{2}2.9698947955 x^3 + \phantom{1}0.5492133813 x^5 \\ 
            & \ftstar[4](x) = 3.6678301399 x - \phantom{2}2.7561018955 x^3 + \phantom{1}0.5421513305 x^5 \\ 
            & \ftstar[5](x) = 2.7632556383 x - \phantom{2}2.0607754898 x^3 + \phantom{1}0.4695405857 x^5 \\ 
            & \ftstar[6](x) = 2.0527445797 x - \phantom{2}1.4345145882 x^3 + \phantom{1}0.4070669182 x^5 \\ 
            & \ftstar[7](x) = 1.8804816691 x - \phantom{2}1.2583997294 x^3 + \phantom{1}0.3779501813 x^5
        \end{aligned}$
        \\    
        \hline 
        $e_\float$ & $7.2868 \times 10^{-5}$ & $4.9233 \times 10^{-5}$ \\
        \bottomrule
    \end{tabular}
    \end{adjustbox}
    \caption{Composite polynomials for half-precision PSD cone projection.}
    \label{tab:comp:half}
\end{table}

\subsection{Practical Considerations for Finite-Precision Implementation}
\label{sec:comp:finite-prec}

When porting $\ftstar$ to GPUs that operate in single- or half-precision, two practical issues arise.

\paragraph{1. Digit loss during matrix rescaling}
Before applying $\ftstar$ to a matrix $X \in \Sn$, we rescale its spectrum to lie in $[-1,1]$, dividing $X$ by an upper bound on its spectral norm $\normtwo{X}$.  A common choice is the Frobenius norm $\normF{X}$~\cite{amsel25arxiv-polar-express} or the infinity norm $\norm{X}_\infty$~\cite{francisco17laa-fixedpoint-approximate-psdcone}.  However, since
\[
    \normtwo{X} \le \normF{X} \le \sqrt{n}\,\normtwo{X}, 
    \qquad
    \normtwo{X} \le \norm{X}_\infty \le \sqrt{n}\,\normtwo{X},
\]
for $n \ge 10{,}000$ these conservative bounds can cost the loss of $\log_{10}(\sqrt{n}) \ge 2$ decimal digits in the worst case.  
To tighten the bound, we adopt the following result, derived from the Residual Norm Theorem~\cite[Theorem 4.5.1]{parlett1987book-symmetric-eig}.

\begin{theorem}[Tighter upper bound for $\normtwo{A}$]
    \label{thm:comp:upper-bound}
    Let $A \in \Sn$ and let $0 \le \lambda_n \le \cdots \le \lambda_1$ be the eigenvalues of $A^2$.  
    Fix $\sigma \in \mathbb{R}$ and assume $\lambda_1$ is the eigenvalue of $A^2$ closest to $\sigma$ (i.e.\ $\lambda_1 = \argmin_{\lambda_i} \abs{\sigma - \lambda_i}$).  
    Then, for every unit vector $q \in \Real{n}$,
    \begin{align}
        \label{eq:comp:upper-bound}
        \normtwo{A} \le \sqrt{\sigma + \normtwo{A^2 q - \sigma q}} .
    \end{align}
\end{theorem}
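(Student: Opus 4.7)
The plan is to invoke the Residual Norm Theorem directly on $A^2$, which is symmetric positive semidefinite since $A \in \Sn$. Concretely, I would first record the key identity $\normtwo{A}^2 = \lambda_1$: because $A$ is symmetric, its eigenvalues are real and the eigenvalues of $A^2$ are their squares, so the top eigenvalue $\lambda_1$ of $A^2$ coincides with $\normtwo{A}^2$. Reducing the target inequality to a statement about $\lambda_1$ makes it natural to invoke a bound for the top eigenvalue of a symmetric matrix.

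Next, I would apply the Residual Norm Theorem to the symmetric matrix $A^2$, the unit vector $q$, and the shift $\sigma$: the theorem guarantees that some eigenvalue of $A^2$ lies within distance $\normtwo{A^2 q - \sigma q}$ of $\sigma$, i.e.\ $\min_i \abs{\lambda_i - \sigma} \le \normtwo{A^2 q - \sigma q}$. The hypothesis that $\lambda_1$ is the eigenvalue of $A^2$ closest to $\sigma$ turns this into
\begin{equation*}
    \abs{\lambda_1 - \sigma} \le \normtwo{A^2 q - \sigma q},
\end{equation*}
which in particular yields $\lambda_1 \le \sigma + \normtwo{A^2 q - \sigma q}$.

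Finally, I would substitute $\normtwo{A}^2 = \lambda_1$ on the left-hand side and take square roots to obtain~\eqref{eq:comp:upper-bound}. Taking the square root is legitimate because $\lambda_1 \ge 0$ guarantees $\sigma + \normtwo{A^2 q - \sigma q} \ge \lambda_1 \ge 0$, so the right-hand side under the root is nonnegative.

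The main obstacle, if any, is not technical but rather justifying the closest-eigenvalue hypothesis in the calling code (for instance, choosing $\sigma$ close to a Rayleigh-quotient estimate $q\tran A^2 q$ and $q$ close to the top eigenvector of $A^2$ via a few power-iteration steps), but that is an implementation concern rather than part of the proof. The proof itself is essentially a one-line consequence of the Residual Norm Theorem applied to $A^2$ combined with the identity $\normtwo{A}^2 = \lambda_{\max}(A^2)$.
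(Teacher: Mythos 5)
Your proposal matches the paper's proof: both invoke the Residual Norm Theorem on $A^2$ to get $\abs{\lambda_1 - \sigma} \le \normtwo{A^2 q - \sigma q}$ (using the closest-eigenvalue hypothesis to identify the relevant eigenvalue as $\lambda_1$), then combine with $\normtwo{A} = \sqrt{\lambda_1}$ and take a square root. Your writeup is slightly more explicit about justifying each intermediate step, but the argument is identical.
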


\begin{proof}
    From~\cite[Theorem 4.5.1, Proof 2]{parlett1987book-symmetric-eig},
    \begin{align}
        \label{eq:comp:residual-norm-theorem}
        \abs{\lambda_1 - \sigma} \le \normtwo{A^2 q - \sigma q}.
    \end{align}
    Because $\normtwo{A} = \sqrt{\normtwo{A^2}} = \sqrt{\lambda_1}$, the claim follows.
\end{proof}

In practice, we choose $(\sigma,q)$ by taking the largest Ritz value produced by a 20-step Lanczos~\cite[Chapter 13]{parlett1987book-symmetric-eig} run on $A^2$ together with its Ritz vector. Although the theorem assumes $\sigma$ is the closest to $\lambda_{\max}(A^2)$, we have found this heuristic to yield a valid upper bound in all our extensive experiments. Since Lanczos iterations only involves matrix-vector multiplications, its running time is negligible compared to GEMMs.

\paragraph{2. Numerical instability}
As noted in~\cite[Section 4.4]{amsel25arxiv-polar-express}, minimax composite polynomials can be unstable in half-precision. Following their rescaling strategy, we multiply $X$ by $\frac{1}{1.01}$ at the end of every half-precision iteration. For single precision, we rescale by $\frac{1}{1.001}$ after each of the first eight iterations.

\paragraph{Final run-time algorithm}
Given the composite polynomials coefficients computed offline, we implement the run-time algorithm in Algorithm~\ref{alg:comp:run-time}. The first part computes the upper bound of $\normtwo{X}$ using Theorem~\ref{thm:comp:upper-bound}, and the second part iteratively applies the composite polynomial filter $\ftstar[T] \circ \ftstar[T-1] \circ \cdots \circ \ftstar[1]$ to $X$ after scaling. The approximation of the projection $\PiSnp{X}$ is then recovered by reconstructing $\frelu(X)$ from $\sign(X)$ and unscaling the result.






\begin{algorithm}[htbp]
    \caption{Run-time projection algorithm to compute~\eqref{eq:intro:psdcone-proj}}
    \label{alg:comp:run-time}
\begin{algorithmic}[1]
    \REQUIRE Symmetric matrix $X$, composite step $T$, polynomials $\{ \ftstar[t] \}_{t=1}^T$.
    \ENSURE Approximation of the projection of $X$ onto the PSD cone $\PiSnp{X}$.

    \STATE Compute the largest Ritz value pair $(\sigma, q)$ with a 20-step Lanczos run on $X^2$.

    \STATE Compute the upper bound of $\normtwo{X}$:
    \begin{equation}
        \tilde{\lambda} \leftarrow \sqrt{\sigma + \normtwo{X^2 q - \sigma q}}.
    \end{equation}

    \STATE Rescale $X$:
    \begin{equation}
        X_0 \leftarrow X / \tilde{\lambda}
    \end{equation}

    \FOR{$t = 1$ \TO $T$}
        \STATE Compute:
        \begin{equation}
            X_t \leftarrow \ftstar[t](X_{t-1}).
        \end{equation}
    \ENDFOR

    \RETURN $\tilde{\lambda} \cdot \frac{1}{2}X_0(I_n + X_T)$
\end{algorithmic}
\end{algorithm}



\section{Implementation and Evaluation}
\label{sec:evaluation}

We implemented all algorithms in native C++ and CUDA, including the matrix-rescaling routine based on Lanczos iterations and the evaluation of composite matrix polynomials. The resulting codebase was benchmarked both on NVIDIA B200 GPUs ($192$ GB RAM) running CUDA~12.9 and on NVIDIA H100 GPUs ($80$ GB RAM) running CUDA~12.4. On the B200, we enabled the newest \texttt{BF16x9} emulation features introduced in CUDA~12.9 for the Blackwell architecture, yielding significantly faster single-precision general GEMMs~\cite{cublas129}. 
All Lanczos iterations are executed on the general CUDA cores in double precision (\doubleprec), mitigating the rapid loss of orthogonality. For composite matrix polynomials, we evaluate $\ftstar_{\mathsf{half}}$ (\cf Table~\ref{tab:comp:half}) on the Tensor Cores in half precision (\halfprec). In the case of $\ftstar_{\mathsf{single}}$ (\cf Table~\ref{tab:comp:single}), single precision (\singleprec) is employed on the H100's CUDA cores, whereas on the B200 we leverage Tensor Core \singleprec{} with emulation enabled. In terms of memory, our composite polynomial implementations require a workspace equivalent to the size of three additional matrices only. We use the state-of-the-art commercial CUDA library \cusolver{} for the \doubleprec{} and \singleprec{} eigenvalue decomposition on GPUs.

\subsection{Benchmarking PSD Cone Projection}
\label{sec:sec:eva:psdcone}
\paragraph{Datasets.}
We evaluate the performance of the proposed method on 33 dense datasets from the Matrix Depot package~\cite{zhang16peerj-matrixdepot}. Out of the 56 datasets available in the package, we removed 23 datasets of which the coefficients would overflow the double precision floating point representation for large matrix sizes, as well as datasets with constraints on the matrix size (e.g., the size being a power of 2). For each of the remaining 33 datasets, we generated 3 instances, of sizes $5000$, $10000$ and $20000$. The matrices were symmetrized by averaging the matrix with its transpose ($A_{\mathsf{sym}} = \frac{1}{2}(A + A\tran)$).


\paragraph{Benchmark methods and metrics.}
We restrict our benchmarks exclusively to GPUs, as CPU-based projection methods are not competitive for large-scale problems. For example, on a $10,000 \times 10,000$ dense symmetric matrix, MATLAB's \texttt{eig} function requires approximately $15$ seconds on a high-performance workstation equipped with a 64-Core Processor and $1$ TB RAM, whereas NVIDIA's \cusolver{} completes the eigenvalue decomposition in about $1$ second on both B200 and H100 GPUs. 
We systematically compare nine methods: 
\begin{itemize}
    \item \CSdouble: classical factorization-based method with \cusolver. We also use this method as a ground truth for the PSD projection in the error metric.
    \item \CSsingle: same as \CSdouble, single precision. 
    \item \PEsingle: composite coefficients proposed in~\cite{amsel25arxiv-polar-express} and implemented in \singleprec. We set $\ell = 10^{-3}$ and $T = 10$, resulting in $31$ GEMMs.
    \item \PEhalf: composite coefficients proposed in~\cite{amsel25arxiv-polar-express} and implemented in \halfprec. We set $\ell = 10^{-3}$ and $T = 7$, resulting in $22$ GEMMs.
    \item \NSsingle: $15$ Newton-Schulz iterations~\cite{francisco17laa-fixedpoint-approximate-psdcone} implemented in \singleprec, resulting in $31$ GEMMs.
    \item \NShalf: $10$ Newton-Schulz iterations~\cite{francisco17laa-fixedpoint-approximate-psdcone} implemented in \halfprec, resulting in $21$ GEMMs.
    \item \CPsingle: the proposed method ($\ftstar_{\mathsf{single}}$) in \singleprec{} with $31$ GEMMs. 
    \item \CPsingle{} (em.): same as \CPsingle, with \texttt{BF16x9} emulation enabled on B200 GPU.
    \item \CPhalf: the proposed method ($\ftstar_{\mathsf{half}}$) in \halfprec{} with $22$ GEMMs.
\end{itemize}
We evaluate the performance of the methods using two metrics: \emph{time} and \emph{relative error}. The time includes both Lanczos iterations, data type conversion, and composite polynomial filtering. Given the input matrix $A \in \Sn$, denote an algorithm's output matrix as $A_{+}$. The relative error is defined as 
\[
    \frac{
        \normF{A_{+} - \PiSnp{A}}
    }{
        \normF{\PiSnp{A}}
    },
\]
where $\PiSnp{A}$ is from \CSdouble. The relative error is computed in \doubleprec, regardless of the data type used for the PSD projection. For the same matrix, both metrics are averaged over 5 runs. The code used for the benchmarks is available at:

\vspace{2mm}
\begin{center}
    \url{https://github.com/ComputationalRobotics/psd_projection_benchmarks}.
\end{center}
\vspace{2mm}


\paragraph{Results.}
A synthetic view of the evaluation results on the B200 GPU is presented in Figure~\ref{fig:psd_time_boxplot} and~\ref{fig:psd_error_boxplot}, with complementary statistics summarized in Tables~\ref{tab:benchmark_stats_5000_B200}, \ref{tab:benchmark_stats_10000_B200}, and \ref{tab:benchmark_stats_20000_B200}. Detailed, dataset-specific results for both B200 and H100 GPUs are provided in \autoref{sec:appendix:complete_comparison}.
In terms of relative error, our proposed composite methods consistently outperform the other factorization-free approaches of their respective precisions, namely Polar Express~\cite{amsel25arxiv-polar-express} and Newton-Schulz~\cite{francisco17laa-fixedpoint-approximate-psdcone}. In \halfprec{}, \CPhalf{} reliably achieves $10^{-3}$ relative error across a wide range of datasets while offering more than an order-of-magnitude speed-up over both \CSdouble{} and \CSsingle{}. Even for large dense matrices with $n = 20000$, it is able to finish projection within $300$ms. In \singleprec{}, although \CSsingle{} generally attains the highest accuracy, \CPsingle{} (with emulation) can achieve speed-ups of up to $2\times$. 
As already noticeable from Figure~\ref{fig:psd_error_boxplot}, there are several datasets (e.g., the \texttt{triw} dataset) on which all composite methods perform poorly. A common characteristic of these matrices is the presence of one dominant extremal eigenvalue accompanied by many small but nonzero eigenvalues. As recommended in~\cite{parlett1987book-symmetric-eig}, applying a deflation technique to first remove the extremal eigenpairs could significantly improve performance in such cases.

\begin{remark}
    We emphasize that the composite baseline methods from Polar Express and Newton–Schulz implemented here are already considerably improved compared to their original formulations~\cite{amsel25arxiv-polar-express,francisco17laa-fixedpoint-approximate-psdcone}, due to our tighter spectral norm outer approximations described in \S~\ref{sec:comp:finite-prec}.
\end{remark}

\begin{figure}[ht]
    \centering
    \vspace{-2mm}
    \includegraphics[width=\textwidth]{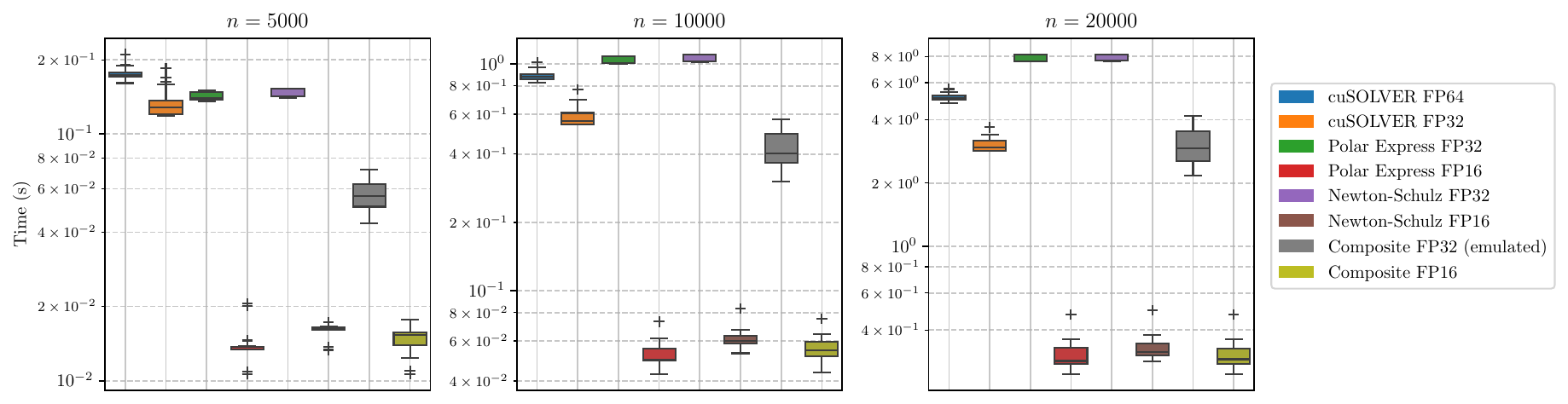}

    \caption{Boxplots for different PSD cone projection methods' \emph{execution time} on B200 GPU. \label{fig:psd_time_boxplot}}
    
    
\end{figure}

\begin{figure}[h]
    \centering
    \vspace{-8mm}
    \includegraphics[width=\textwidth]{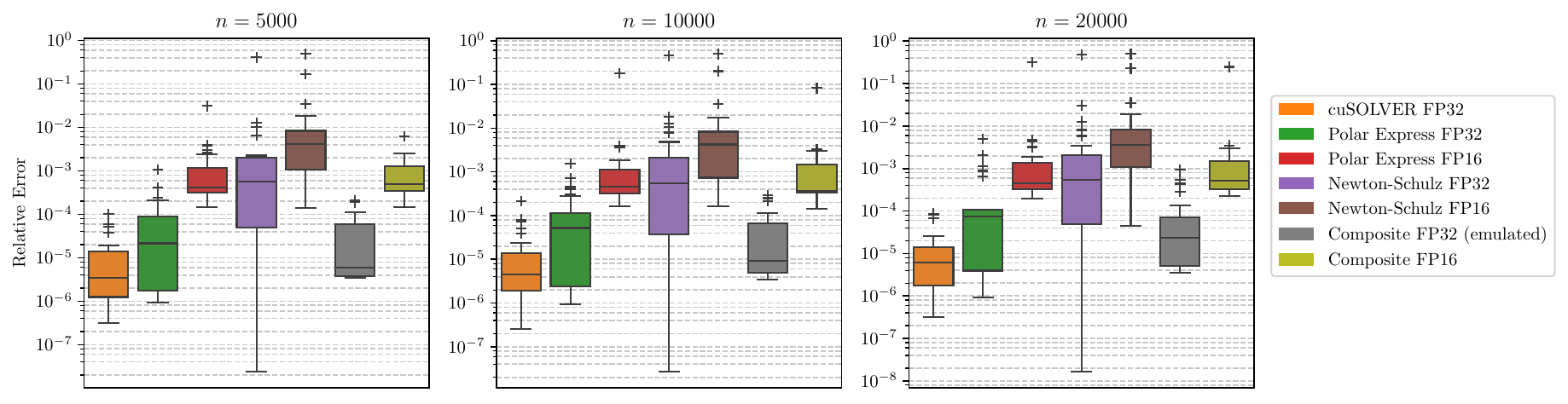}

    \caption{Boxplots for different PSD cone projection methods' \emph{relative error} on B200 GPU. \label{fig:psd_error_boxplot}}

    
\end{figure}

\begin{table}[ht]
    \begin{adjustbox}{width=\linewidth}
    \begin{tabular}{lllllllll}
        \toprule
        {} & \multicolumn{3}{c}{Relative Error} & \multicolumn{3}{c}{Time (s)} \\
        {} & {\quad Mean} & {\quad Median} & {\quad Std.} & {\quad Mean} & {\quad Median} & {\quad Std.} \\
        \midrule
        \CSdouble & \num{0.00e+00} & \num{0.00e+00} & \num{0.00e+00} & \num{1.75e-01} & \num{1.74e-01} & \num{1.03e-02} \\
        \CSsingle & \num{1.25e-05} & \num{3.40e-06} & \num{2.18e-05} & \num{1.32e-01} & \num{1.28e-01} & \num{1.63e-02} \\
        \PEsingle & \num{9.16e-05} & \num{2.14e-05} & \num{1.98e-04} & \num{1.43e-01} & \num{1.40e-01} & \num{5.47e-03} \\
        \PEhalf & \num{1.83e-03} & \num{4.19e-04} & \num{5.36e-03} & \num{1.37e-02} & \num{1.37e-02} & \num{1.99e-03} \\
        \NSsingle & \num{1.47e-02} & \num{5.65e-04} & \num{7.07e-02} & \num{1.47e-01} & \num{1.43e-01} & \num{5.56e-03} \\
        \NShalf & \num{2.56e-02} & \num{4.07e-03} & \num{8.79e-02} & \num{1.60e-02} & \num{1.62e-02} & \num{1.00e-03} \\
        \CPsingle{} (em.) & \num{3.71e-05} & \num{5.96e-06} & \num{5.28e-05} & \num{5.59e-02} & \num{5.59e-02} & \num{7.36e-03} \\
        \CPhalf & \num{9.53e-04} & \num{4.86e-04} & \num{1.16e-03} & \num{1.48e-02} & \num{1.53e-02} & \num{1.67e-03} \\
        \bottomrule
    \end{tabular}
    \end{adjustbox}
    \caption{Statistics of the PSD projection methods on datasets of size 5000 for B200 GPU. \label{tab:benchmark_stats_5000_B200}}
\end{table}

\begin{table}[!ht]
    \begin{adjustbox}{width=\linewidth}
    \begin{tabular}{lllllllll}
        \toprule
        {} & \multicolumn{3}{c}{Relative Error} & \multicolumn{3}{c}{Time (s)} \\
        {} & {\quad Mean} & {\quad Median} & {\quad Std.} & {\quad Mean} & {\quad Median} & {\quad Std.} \\
        \midrule
        \CSdouble & \num{0.00e+00} & \num{0.00e+00} & \num{0.00e+00} & \num{8.85e-01} & \num{8.77e-01} & \num{4.47e-02} \\
        \CSsingle & \num{1.89e-05} & \num{4.48e-06} & \num{4.06e-05} & \num{5.84e-01} & \num{5.58e-01} & \num{5.36e-02} \\
        \PEsingle & \num{1.57e-04} & \num{5.21e-05} & \num{3.01e-04} & \num{1.04e+00} & \num{1.01e+00} & \num{3.75e-02} \\
        \PEhalf & \num{6.38e-03} & \num{4.51e-04} & \num{3.08e-02} & \num{5.17e-02} & \num{4.97e-02} & \num{5.90e-03} \\
        \NSsingle & \num{1.61e-02} & \num{5.55e-04} & \num{7.87e-02} & \num{1.06e+00} & \num{1.03e+00} & \num{3.79e-02} \\
        \NShalf & \num{2.65e-02} & \num{4.22e-03} & \num{9.08e-02} & \num{6.09e-02} & \num{5.98e-02} & \num{5.33e-03} \\
        \CPsingle{} (em.) & \num{4.75e-05} & \num{9.12e-06} & \num{7.37e-05} & \num{4.16e-01} & \num{4.03e-01} & \num{7.42e-02} \\
        \CPhalf & \num{3.45e-03} & \num{3.70e-04} & \num{1.42e-02} & \num{5.48e-02} & \num{5.45e-02} & \num{6.41e-03} \\
        \bottomrule
    \end{tabular}
    \end{adjustbox}
    \caption{Statistics of the PSD projection methods on datasets of size 10000 for B200 GPU. \label{tab:benchmark_stats_10000_B200}}
\end{table}

\begin{table}[!ht]
    \begin{adjustbox}{width=\linewidth}
    \begin{tabular}{lllllllll}
        \toprule
        {} & \multicolumn{3}{c}{Relative Error} & \multicolumn{3}{c}{Time (s)} \\
        {} & {\quad Mean} & {\quad Median} & {\quad Std.} & {\quad Mean} & {\quad Median} & {\quad Std.} \\
        \midrule
        \CSdouble & \num{0.00e+00} & \num{0.00e+00} & \num{0.00e+00} & \num{5.13e+00} & \num{5.08e+00} & \num{2.43e-01} \\
        \CSsingle & \num{1.73e-05} & \num{6.21e-06} & \num{2.69e-05} & \num{3.04e+00} & \num{2.96e+00} & \num{2.02e-01} \\
        \PEsingle & \num{4.08e-04} & \num{7.45e-05} & \num{9.46e-04} & \num{7.84e+00} & \num{7.58e+00} & \num{2.84e-01} \\
        \PEhalf & \num{1.09e-02} & \num{4.61e-04} & \num{5.60e-02} & \num{2.98e-01} & \num{2.85e-01} & \num{4.41e-02} \\
        \NSsingle & \num{1.69e-02} & \num{5.35e-04} & \num{8.28e-02} & \num{7.91e+00} & \num{7.64e+00} & \num{2.85e-01} \\
        \NShalf & \num{2.72e-02} & \num{3.69e-03} & \num{9.30e-02} & \num{3.25e-01} & \num{3.14e-01} & \num{3.92e-02} \\
        \CPsingle{} (em.) & \num{1.21e-04} & \num{2.36e-05} & \num{2.16e-04} & \num{2.99e+00} & \num{2.93e+00} & \num{5.58e-01} \\
        \CPhalf & \num{8.59e-03} & \num{5.14e-04} & \num{4.30e-02} & \num{3.01e-01} & \num{2.91e-01} & \num{4.42e-02} \\
        \bottomrule
    \end{tabular}
    \end{adjustbox}
    \caption{Statistics of the PSD projection methods on datasets of size 20000 for B200 GPU. \label{tab:benchmark_stats_20000_B200}}
\end{table}

\subsection{Integration into Semidefinite Programming}

\paragraph{ADMM for SDP.}
We further assess our low-precision composite PSD-cone projection by embedding it in a first-order solver for the semidefinite programming:
\begin{equation}
    \label{eq:exp:sdp}
    \begin{array}{llllll}
        \text{Primal:} & \text{minimize}   & \inprod{C}{X} 
                      & \qquad\qquad
        \text{Dual:}   & \text{maximize}   & b\tran y \\[2pt]
                      & \text{subject to} & \Asdp X = b 
                      &                   & \text{subject to} & \AsdpT y + S = C \\[2pt]
                      &                   & X \in \Symp{n}    
                      &                   &                   & S \in \Symp{n},
    \end{array}
\end{equation}
where the linear map $\Asdp : \Sn \to \Real{m}$ is defined by $\Asdp X := (\inprod{A_1}{X},\dots,\inprod{A_m}{X})$ and its adjoint $\AsdpT y = \sum_{i=1}^m y_i A_i$. We assume the matrices $\{A_i\}_{i=1}^m$ are linearly independent. Problem~\eqref{eq:exp:sdp} is solved with the three-step Alternating Direction Method of Multipliers (ADMM) scheme
from~\cite{wen10mpc-admmsdp,kang25arxiv-admm}:
\begin{subequations}
    \label{eq:exp:admm-three-step}
    \begin{align}
        \ykpo &= (\Asdp \AsdpT)^{-1}
                \!\left(\sigma^{-1} b - \Asdp\bigl(\sigma^{-1}\Xk + \Sk - C\bigr)\right),\\[2pt]
        \Skpo &= \Pi_{\Symp{n}}\!\left(C - \AsdpT \ykpo - \sigma^{-1}\Xk\right),\\[2pt]
        \Xkpo &= \Xk + \sigma\!\left(\Skpo + \AsdpT \ykpo - C\right),
    \end{align}
\end{subequations}
with penalty parameter $\sigma > 0$. For large-scale SDPs, the PSD-cone projection $\Pi_{\Symp{n}}(\cdot)$ is typically the main computational bottleneck. We therefore replace $\Pi_{\Symp{n}}(\cdot)$ with our composite polynomial filtering, implemented in both \singleprec{} and \halfprec{} arithmetic. 

\paragraph{Experiment set-up.} We choose four large-scale instances from the Mittelmann SDP test set~\cite{mittelmann2006dataset-sparse-sdp-problems} whose matrix dimension is at least $5000$:
\texttt{G55mc} ($n = 5000$), \texttt{G59mc} ($n = 5000$), \texttt{G60\_mb} ($n = 7000$) and \texttt{G60mc} ($n = 7000$).
The ADMM iteration limit is fixed at $5000$, and the algorithm stops once the maximum KKT residual $\eta$ falls below $10^{-4}$, where
\[
\resizebox{\textwidth}{!}{$
    \eta := \max \left\{
        \frac{\normtwo{\Asdp X - b}}{1 + \normtwo{b}},\;
        \frac{\normF{\AsdpT y + S - C}}{1 + \normF{C}},\;
        \frac{\abs{\inprod{C}{X} - b\tran y}}{1 + \abs{\inprod{C}{X}} + \abs{b\tran y}},\;
        \frac{\min \{ 0, \lambda_{\min}(X) \}}{1 + \normtwo{b}},\;
        \frac{\min \{ 0, \lambda_{\min}(S) \}}{1 + \normF{C}}
    \right\}.
$}
\]
The ADMM iterate~\eqref{eq:exp:admm-three-step} is warm-started with our composite polynomial filters.  
Because no eigenvalues are computed explicitly in this phase, we monitor only the quantity 
\[
\max \left\{
    \frac{\normtwo{\Asdp X - b}}{1 + \normtwo{b}}, 
    \frac{\normF{\AsdpT y + S - C}}{1 + \normF{C}},  
    \frac{\abs{\inprod{C}{X} - b\tran y}}{1 + \abs{\inprod{C}{X}} + \abs{b\tran y}}
 \right\},
\] 
using it as a surrogate for $\eta$.  Once this surrogate first drops below $10^{-2}$, we switch to the \doubleprec{} PSD-cone projection based on an eigenvalue decomposition.


\begin{figure}[t]

    \begin{minipage}{\textwidth}
        \begin{tabular}{cccc}
            \begin{minipage}{0.22\textwidth}
                \centering
                \hspace{-4mm}\includegraphics[width=\columnwidth]{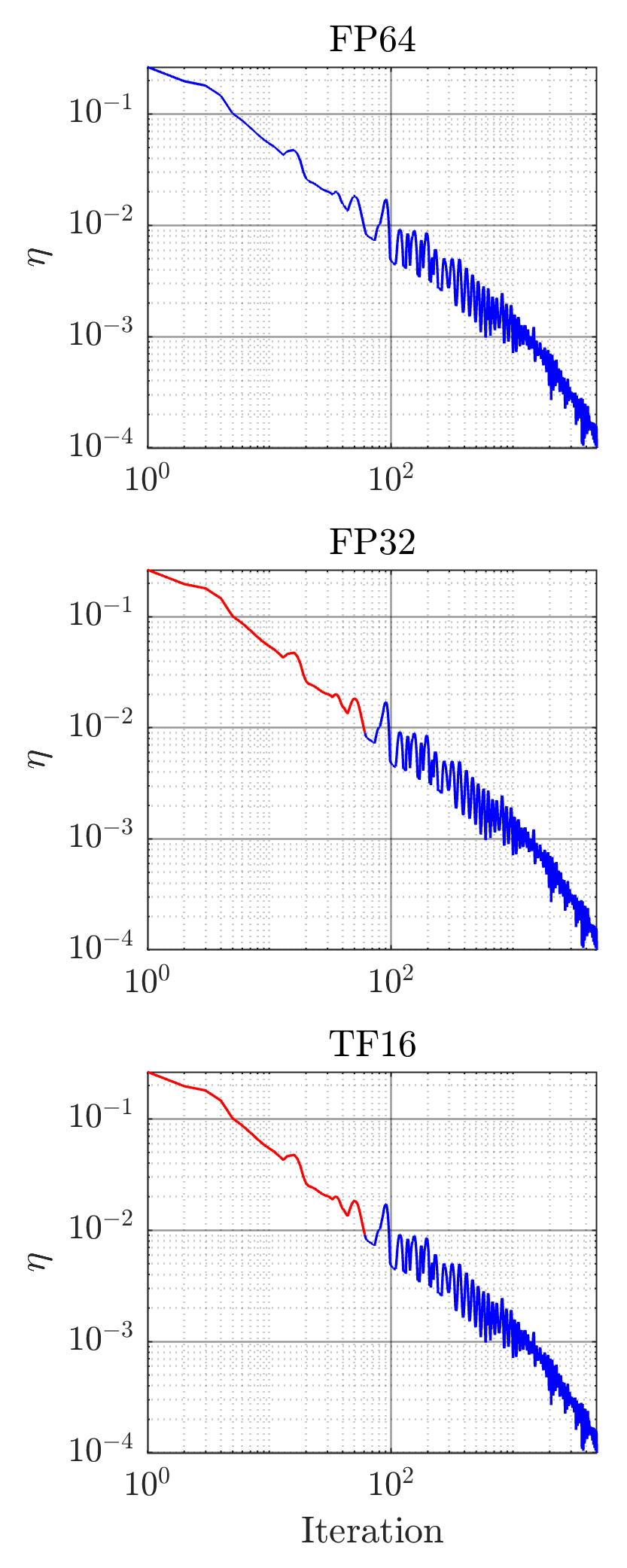}
                \makebox[\columnwidth][l]{(a) \texttt{G55mc}, $n = 5000$}
            \end{minipage}

            \begin{minipage}{0.22\textwidth}
                \centering
                \hspace{-4mm}\includegraphics[width=\columnwidth]{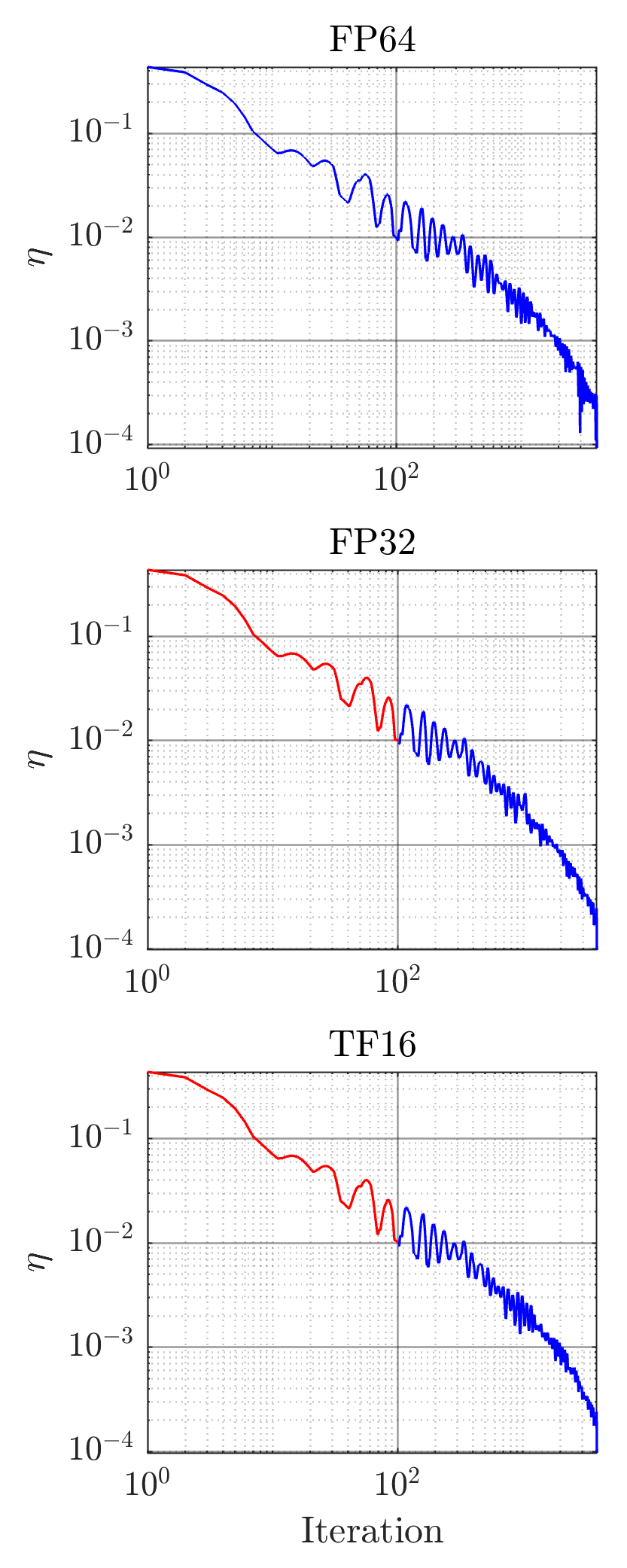}
                \makebox[\columnwidth][l]{(b) \texttt{G59mc}, $n = 5000$}
            \end{minipage}

            \begin{minipage}{0.22\textwidth}
                \centering
                \hspace{-4mm}\includegraphics[width=\columnwidth]{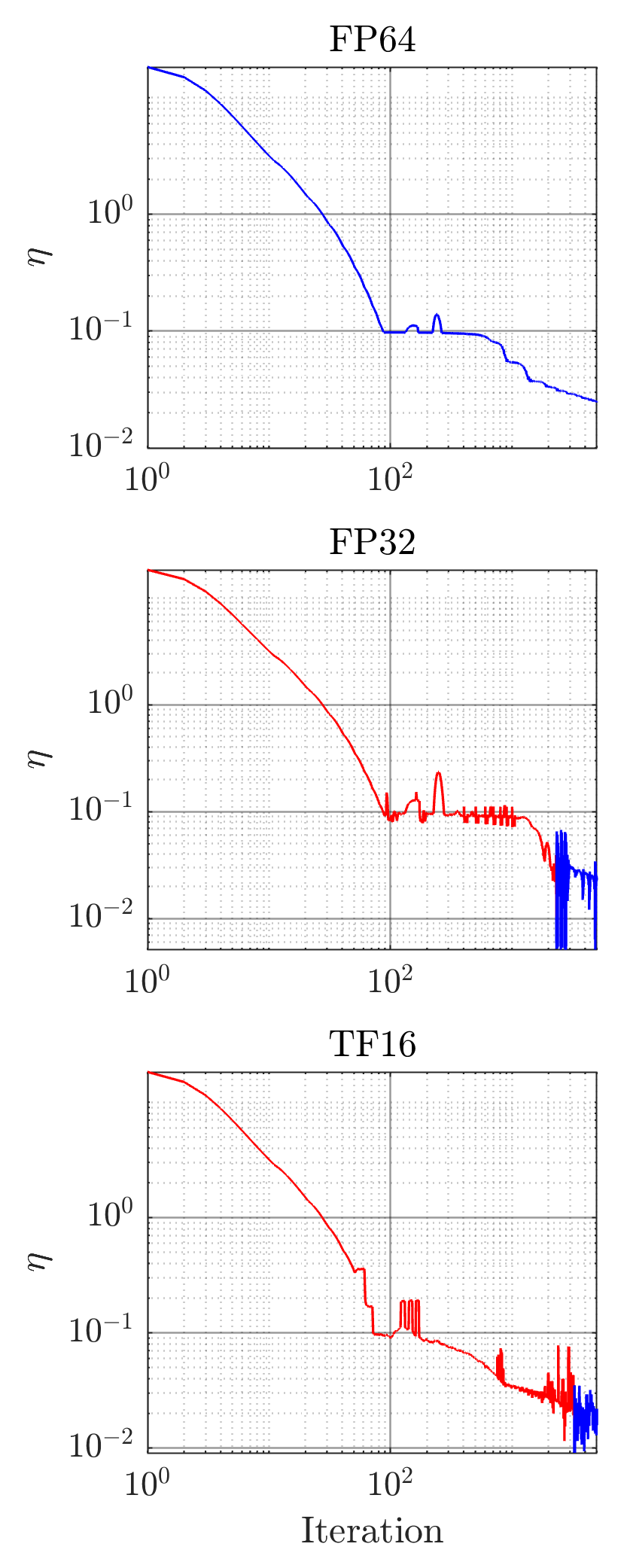}
                \makebox[\columnwidth][l]{(c) \texttt{G60\_mb}, $n = 7000$}
            \end{minipage}

            \begin{minipage}{0.22\textwidth}
                \centering
                \hspace{-4mm}\includegraphics[width=\columnwidth]{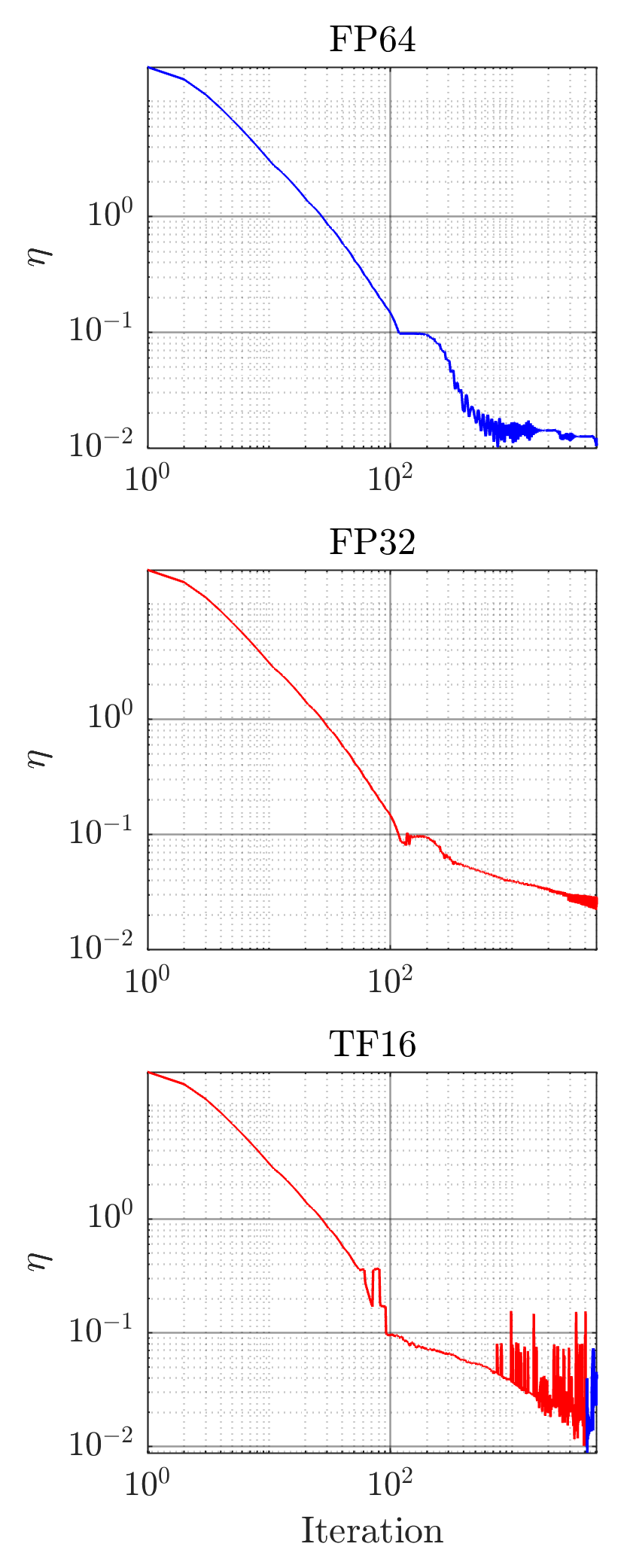}
                \makebox[\columnwidth][l]{(d) \texttt{G60mc}, $n = 7000$}
            \end{minipage}
        \end{tabular}
    \end{minipage}

    \caption{Numerical results for warm-starting ADMM with low-precision PSD-cone projection via composite polynomial filtering. \textcolor{red}{Red segments}: low-precision projection phase; \textcolor{blue}{blue segments}: high-precision phase.  
    Curves (top to bottom):  
    (1) baseline with purely \doubleprec{} projections;  
    (2) warm-start using \singleprec{} composite filtering;  
    (3) warm-start using \halfprec{} composite filtering.  \label{figs:exp:admm}}
\end{figure}

\paragraph{Results.} The results are shown in Figure~\ref{figs:exp:admm}. 
In every experiment we examine, \emph{a posteriori}, the minimum eigenvalues of the final iterates $\Xk$ and $\Sk$; they remain far below $10^{-8}$---even for \texttt{G60mc} in \singleprec{}, where the algorithm never switches to the high-precision projection. This confirms that our surrogate for $\eta$ is adequate.
For the two easier instances (\texttt{G55mc} and \texttt{G59mc}) ADMM reduces $\eta$ below $10^{-4}$ within $5000$ iterations. Across \doubleprec{}, \singleprec{}, and \halfprec{} arithmetic the evolution of $\eta$ is virtually identical, showing that warming up the first 100 iterations with composite filtering sacrifices almost no accuracy.
The harder instances (\texttt{G60mc} and \texttt{G60\_mb}) resist pushing $\eta$ below $10^{-2}$.  
Nevertheless, the \singleprec{} and \halfprec{} projections---although noisier---do not slow convergence.  
In three of the four problems, low-precision projections actually deliver a smaller final $\eta$ while reducing the total projection time by up to an order of magnitude. These findings demonstrate that our low-precision projection strategy is both effective and efficient for large-scale semidefinite optimization.

\begin{remark}
    One might question the overall benefit of warm-starting, given that we eventually revert to high-precision projections to obtain higher-accuracy SDP solutions. However, empirical evidence shows that ADMM quickly enters a \emph{low-rank} regime in which either $\Xk$ or $\Sk$ maintains low rank~\cite{souto2022opt-low-rank-sdp-approximate-operator-splitting,kang25arxiv-admm}. In that phase, iterative low-rank eigensolvers such as LOBPCG~\cite{knyazev2001sisc-lobpcg} can perform the PSD projection in nearly $\mathcal{O}(n^{2})$ time per iteration.  Indeed, an \emph{a posteriori} inspection of the final $\Xk$ ranks confirms this behavior:  
    (a) \texttt{G55mc}: $53$;  
    (b) \texttt{G59mc}: $21$;  
    (c) \texttt{G60\_mb}: $224$ (\doubleprec), $248$ (\singleprec), $178$ (\halfprec);  
    (d) \texttt{G60mc}: $119$ (\doubleprec), $298$ (\singleprec), $161$ (\halfprec).  
    Thus, our low-precision projection methods could significantly accelerate entry into this advantageous \emph{low-rank} regime.
\end{remark}


\section{Conclusion}
\label{sec:conclusion}

We introduced a factorization-free PSD cone projection method that replaces expensive eigenvalue decompositions with a short cascade of low-degree polynomials executed entirely as GEMMs, achieving up to an order-of-magnitude acceleration on NVIDIA B200/H100 GPUs for moderate-accuracy projections. Used as a warm-start within a first-order SDP solver, the filter significantly reduces the total projection time without slowing convergence.

\paragraph{Limitations and future work.}
Composite filtering excels on dense spectra but can stumble when a few dominant eigenvalues are present; a preliminary deflation step may therefore be required. Future work will explore (\emph{i}) hybrid variants that couple the polynomial filter with low-rank corrections to cut GEMM counts further, and (\emph{ii}) adaptive strategies that adjust the polynomial depth online based on real-time spectral estimates.

\section*{Acknowledgment}

We thank Steven Dalton, Cris Cecka, Samuel Rodriguez Bernabeu, and Mikail Khona at NVIDIA for valuable discussions on numerical linear algebra methods on GPUs. Part of this research was conducted during visits by SK and HY to the Polynomial Optimization team at LAAS-CNRS, where Jean-Bernard Lasserre, Didier Henrion, and Victor Magron provided helpful insights.

\clearpage

\appendix


\section{Remez Algorithm}
\label{app:remez}

This section provides Remez algorithm's necessary backgrounds for completeness. 
\begin{definition}[Minimax approximate polynomial]
    Given a continuous function $g$ over a compact interval $[a, b]$ ($a,b > 0$), $\fstar \in \mathbb{R}^{\odd}_d[x]$ is called a \textit{minimax approximate polynomial}, if and only if 
    \begin{align}
        \label{app:eq:minimax}
        \fstar = \argmin_{f \in \mathbb{R}^{\odd}_d[x]} \max_{x \in [a, b]} \abs{ f(x) - g(x) }
    \end{align}
    The existence and uniqueness of such an $\fstar$ is guaranteed~\cite[Lemma A.1]{amsel25arxiv-polar-express}.
\end{definition}

Remez algorithm~\cite{remez62ssr-remez} provides a systematic way to compute such an $\fstar$, as illustrated in Algorithm~\ref{app:alg:remez}.

\begin{algorithm}[htbp]
    \caption{Remez algorithm~\cite[Algorithm 1]{lee22tdsc-minimax-approximation-sign}}
    \label{app:alg:remez}
\begin{algorithmic}[1]
    \REQUIRE The target function $g$, compact interval $[a, b]$, a set of polynomial basis $\Phi := \{ \varphi_\ell \}_{\ell=1}^d$
    \ENSURE The minimax approximate function $\fstar$

    \STATE Initialization of extremal points: $d + 1$ Chebyshev nodes $\{ x_i \}_{i=1}^{d+1}$ on $[a, b]$

    \FOR{$k = 1$ to $\infty$}
        \STATE (1) Build and solve the alternation system. The linear system has $d+1$ variables: $\{ c_\ell \}_{\ell=1}^{d}$ and $E$. The $d+1$ linear equations are:
        \begin{align*}
            \sum_{\ell=1}^d c_\ell \varphi_\ell (x_i) - g(x_i) = (-1)^i E, \quad \forall i = 1, \dots , d+1.
        \end{align*}

        \STATE (2) Get the $d$ roots $\{ z_\ell \}_{\ell=1}^d$ of the residual $e(x) := \sum_{\ell=1}^d c_\ell \varphi_\ell (x) - g(x)$: 
        \begin{align*}
            x_1 < z_1 <x_2 < z_2 < x_3 < \dots < x_{d-1} < z_{d-1} < x_{d} < z_d < x_{d+1}
        \end{align*}

        \STATE (3) Update the $d+1$ extremal points. Divide $[a, b]$ into $d+1$ sub-intervals:
        \begin{align*}
            [z_0:=a, z_1], \ [z_1, z_2], \ \dots, \, [z_{d-1}, z_d], \ [z_d, z_{d+1}:=b]
        \end{align*}
        For each sub-interval $[z_{i-1}, z_i]$ ($i = 1, \dots, d+1$), find the maximum (resp. minimum) point of $e(x)$ if $e(x_i)$ is positive (resp. negative). Denote these $d+1$ extremal points as $\{ y_i \}_{i=1}^{d+1}$.

        \STATE \vspace{2mm}

        \STATE (4) Check convergence. If $\{ y_i \}_{i=1}^{d+1}$ is sufficiently close to $\{ x_i \}_{i=1}^{d+1}$, break. Else, replace $\{ x_i \}_{i=1}^{d+1}$ with $\{ y_i \}_{i=1}^{d+1}$.
    \ENDFOR
\end{algorithmic}
\end{algorithm}

\section{Complete Comparison of PSD Projection Methods}
\label{sec:appendix:complete_comparison}
This section provides the complete results of the comparison of the different PSD projection methods, including the relative error and execution time of each method on each dataset for both GPU architectures (H100 and B200). Tables~\ref{tab:benchmark_stats_5000_H100}, \ref{tab:benchmark_stats_10000_H100}, and \ref{tab:benchmark_stats_20000_H100} show the results for the H100 GPU aggregated over the datasets. Figures~\ref{fig:psd_time_boxplot_H100} and \ref{fig:psd_error_boxplot_H100} show the same results in boxplot format. Figures~\ref{fig:psd_b200_time} and \ref{fig:psd_h100_time} show the execution time of each method on the B200 and H100 GPUs, separating the different datasets. Figures~\ref{fig:psd_b200_error} and \ref{fig:psd_h100_error} show the relative error of each method on the B200 and H100 GPUs per dataset.

\begin{figure}[ht]
    \centering
    \includegraphics[width=\textwidth]{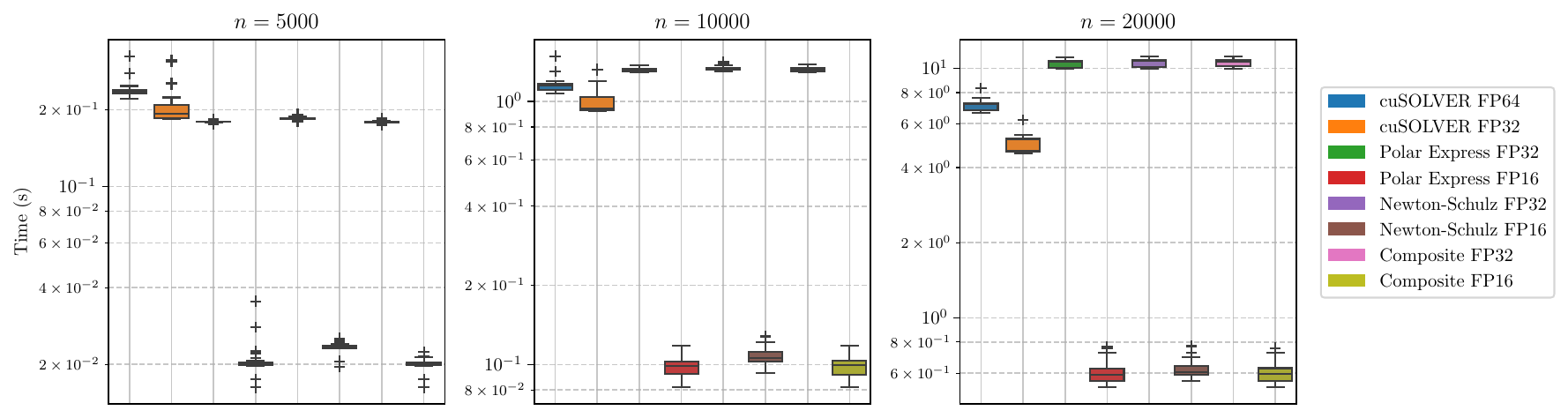}

    \caption{Boxplots for different PSD cone projection methods' \emph{execution time} on H100 GPU. \label{fig:psd_time_boxplot_H100}}
    
    
\end{figure}

\begin{figure}[ht]
    \centering
    \includegraphics[width=\textwidth]{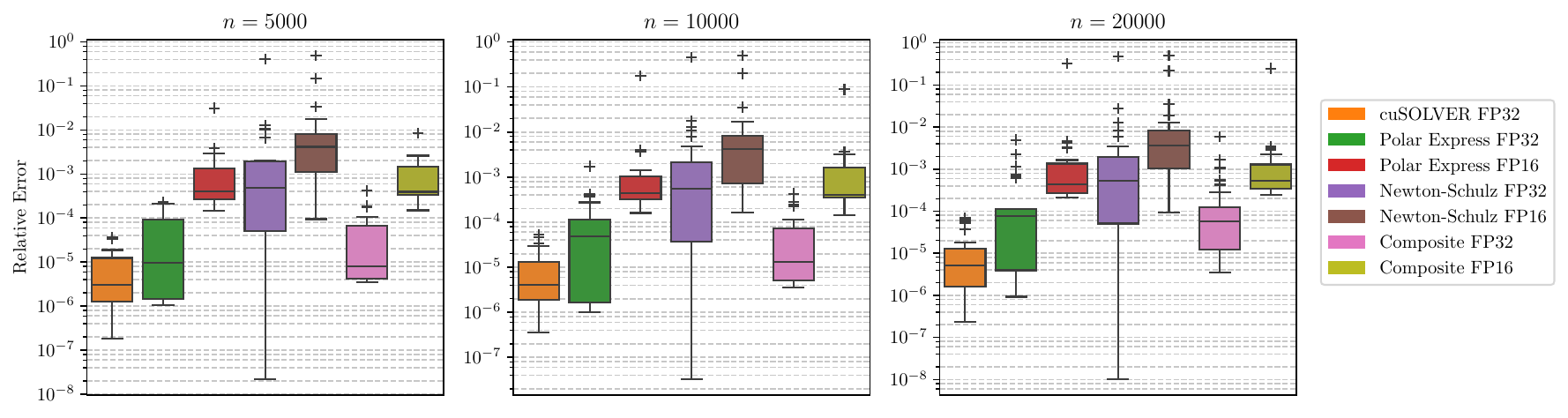}

    \caption{Boxplots for different PSD cone projection methods' \emph{relative error} on H100 GPU. \label{fig:psd_error_boxplot_H100}}

    
\end{figure}
\begin{table}[ht]
\begin{adjustbox}{width=\linewidth}
\begin{tabular}{lllllllll}
\toprule
{} & \multicolumn{3}{c}{Relative Error} & \multicolumn{3}{c}{Time (s)} \\
{} & {\quad Mean} & {\quad Median} & {\quad Std.} & {\quad Mean} & {\quad Median} & {\quad Std.} \\
\midrule
\CSdouble & \num{0.00e+00} & \num{0.00e+00} & \num{0.00e+00} & \num{2.38e-01} & \num{2.36e-01} & \num{1.88e-02} \\
\CSsingle & \num{7.96e-06} & \num{3.10e-06} & \num{1.06e-05} & \num{2.05e-01} & \num{1.93e-01} & \num{3.28e-02} \\
\PEsingle & \num{5.43e-05} & \num{9.70e-06} & \num{7.31e-05} & \num{1.79e-01} & \num{1.79e-01} & \num{1.85e-03} \\
\PEhalf & \num{1.83e-03} & \num{4.08e-04} & \num{5.38e-03} & \num{2.06e-02} & \num{2.01e-02} & \num{3.34e-03} \\
\NSsingle & \num{1.47e-02} & \num{4.77e-04} & \num{7.07e-02} & \num{1.85e-01} & \num{1.85e-01} & \num{2.42e-03} \\
\NShalf & \num{2.49e-02} & \num{4.14e-03} & \num{8.70e-02} & \num{2.32e-02} & \num{2.33e-02} & \num{1.38e-03} \\
\CPsingle{} & \num{4.93e-05} & \num{7.95e-06} & \num{8.21e-05} & \num{1.79e-01} & \num{1.79e-01} & \num{2.51e-03} \\
\CPhalf & \num{1.05e-03} & \num{3.95e-04} & \num{1.50e-03} & \num{1.99e-02} & \num{2.01e-02} & \num{1.39e-03} \\
\bottomrule
\end{tabular}
\end{adjustbox}
\caption{Statistics of the PSD projection methods on datasets of size 5000 for H100 GPU. \label{tab:benchmark_stats_5000_H100}}
\end{table}

\begin{table}[ht]
\begin{adjustbox}{width=\linewidth}
\begin{tabular}{lllllllll}
\toprule
{} & \multicolumn{3}{c}{Relative Error} & \multicolumn{3}{c}{Time (s)} \\
{} & {\quad Mean} & {\quad Median} & {\quad Std.} & {\quad Mean} & {\quad Median} & {\quad Std.} \\
\midrule
\CSdouble & \num{0.00e+00} & \num{0.00e+00} & \num{0.00e+00} & \num{1.16e+00} & \num{1.15e+00} & \num{7.47e-02} \\
\CSsingle & \num{1.08e-05} & \num{4.07e-06} & \num{1.53e-05} & \num{1.00e+00} & \num{9.43e-01} & \num{1.04e-01} \\
\PEsingle & \num{1.34e-04} & \num{4.91e-05} & \num{3.09e-04} & \num{1.32e+00} & \num{1.32e+00} & \num{2.43e-02} \\
\PEhalf & \num{6.37e-03} & \num{4.47e-04} & \num{3.08e-02} & \num{9.81e-02} & \num{9.87e-02} & \num{8.67e-03} \\
\NSsingle & \num{1.61e-02} & \num{5.54e-04} & \num{7.87e-02} & \num{1.34e+00} & \num{1.34e+00} & \num{2.70e-02} \\
\NShalf & \num{2.65e-02} & \num{4.23e-03} & \num{9.07e-02} & \num{1.08e-01} & \num{1.06e-01} & \num{8.49e-03} \\
\CPsingle{} & \num{6.20e-05} & \num{1.31e-05} & \num{9.72e-05} & \num{1.32e+00} & \num{1.32e+00} & \num{2.69e-02} \\
\CPhalf & \num{3.64e-03} & \num{4.00e-04} & \num{1.52e-02} & \num{9.82e-02} & \num{9.93e-02} & \num{8.90e-03} \\
\bottomrule
\end{tabular}
\end{adjustbox}
\caption{Statistics of the PSD projection methods on datasets of size 10000 for H100 GPU. \label{tab:benchmark_stats_10000_H100}}
\end{table}

\begin{table}[t]
\begin{adjustbox}{width=\linewidth}
\begin{tabular}{lllllllll}
\toprule
{} & \multicolumn{3}{c}{Relative Error} & \multicolumn{3}{c}{Time (s)} \\
{} & {\quad Mean} & {\quad Median} & {\quad Std.} & {\quad Mean} & {\quad Median} & {\quad Std.} \\
\midrule
\CSdouble & \num{0.00e+00} & \num{0.00e+00} & \num{0.00e+00} & \num{7.14e+00} & \num{7.20e+00} & \num{3.21e-01} \\
\CSsingle & \num{1.42e-05} & \num{5.21e-06} & \num{2.09e-05} & \num{4.91e+00} & \num{4.66e+00} & \num{3.73e-01} \\
\PEsingle & \num{4.01e-04} & \num{7.80e-05} & \num{9.50e-04} & \num{1.05e+01} & \num{1.06e+01} & \num{3.70e-01} \\
\PEhalf & \num{1.08e-02} & \num{4.45e-04} & \num{5.60e-02} & \num{6.03e-01} & \num{5.93e-01} & \num{6.06e-02} \\
\NSsingle & \num{1.68e-02} & \num{5.34e-04} & \num{8.28e-02} & \num{1.05e+01} & \num{1.07e+01} & \num{3.99e-01} \\
\NShalf & \num{2.68e-02} & \num{3.69e-03} & \num{9.24e-02} & \num{6.22e-01} & \num{6.07e-01} & \num{5.32e-02} \\
\CPsingle{} & \num{3.61e-04} & \num{5.82e-05} & \num{1.07e-03} & \num{1.05e+01} & \num{1.07e+01} & \num{3.84e-01} \\
\CPhalf & \num{8.49e-03} & \num{5.25e-04} & \num{4.30e-02} & \num{6.04e-01} & \num{5.94e-01} & \num{6.02e-02} \\
\bottomrule
\end{tabular}
\end{adjustbox}
\caption{Statistics of the PSD projection methods on datasets of size 20000 for H100 GPU. \label{tab:benchmark_stats_20000_H100}}
\end{table}

\newpage

\newgeometry{bottom=0cm,top=2cm,left=2cm,right=2cm} 
\includepdf[
    pages=1,    
    width=\paperheight,
    height=\paperwidth,
    angle=90,
    pagecommand={\thispagestyle{empty}\vspace*{\fill}\captionof{figure}{PSD projection time on B200 GPU. \label{fig:psd_b200_time}}}
]{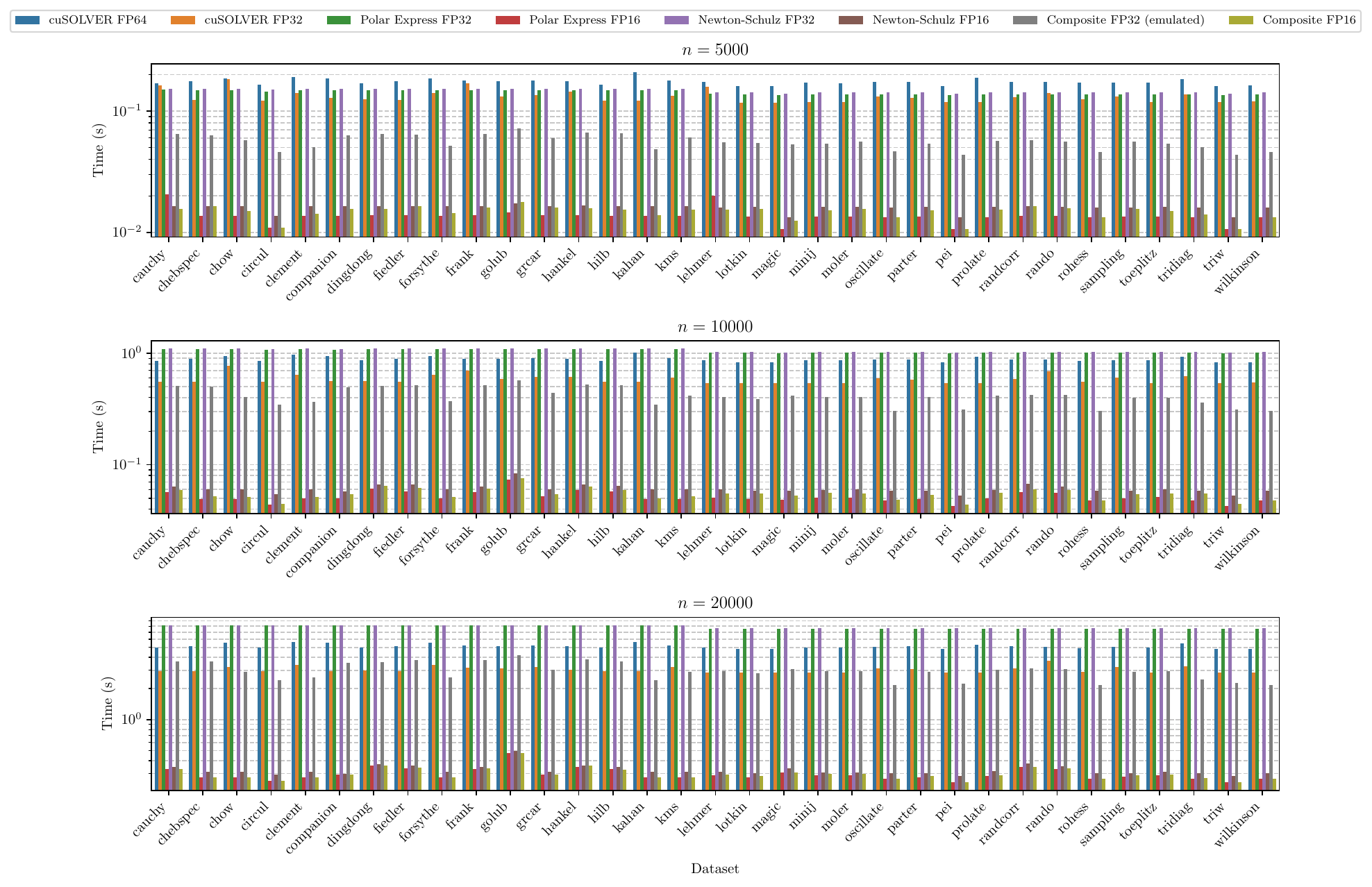}
\includepdf[
    pages=1,
    width=\paperheight,
    height=\paperwidth,
    angle=90,
    pagecommand={\thispagestyle{empty}\vspace*{\fill}\captionof{figure}{PSD projection time on H100 GPU. \label{fig:psd_h100_time}}}
]{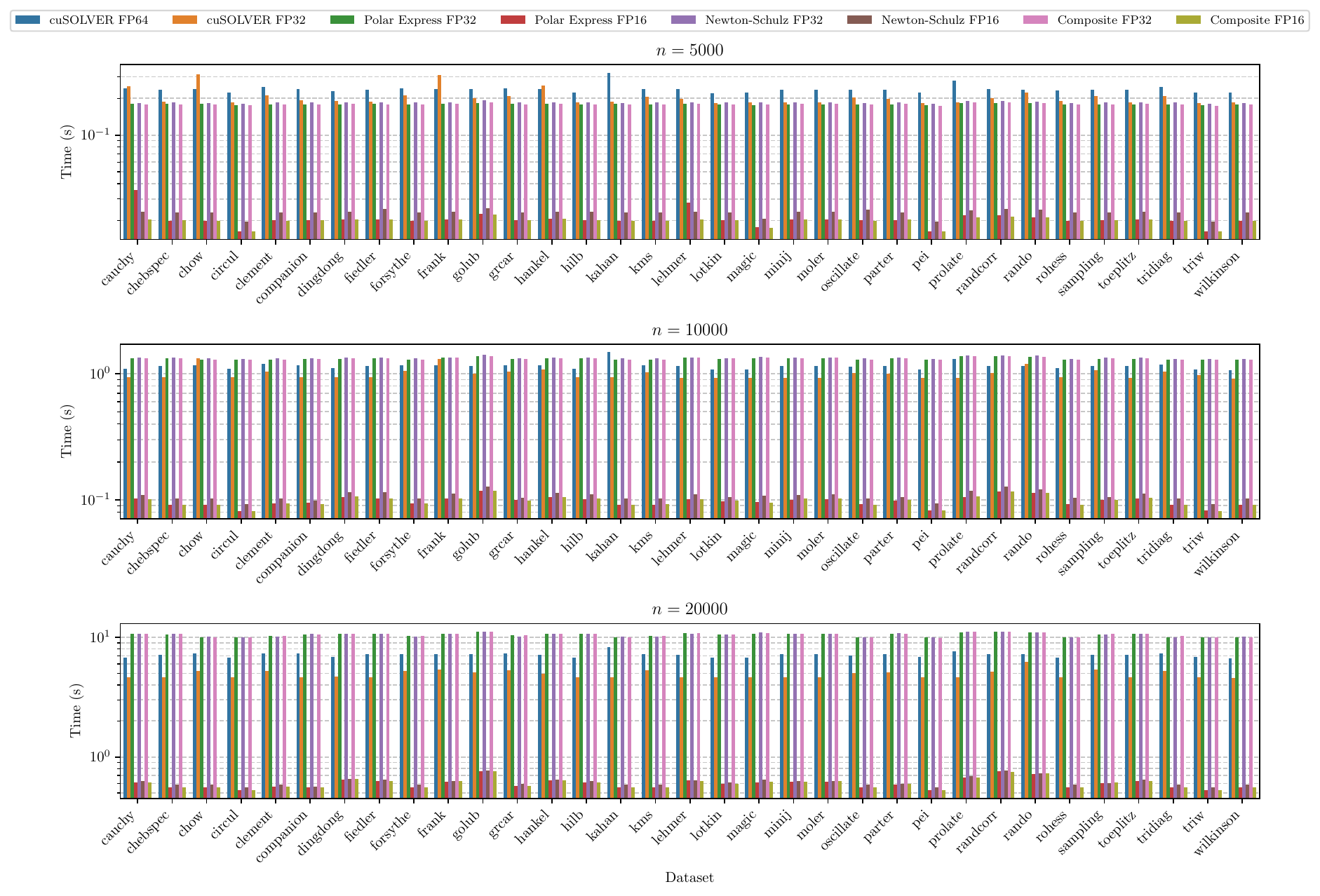}
\includepdf[
    pages=1,
    scale=0.96,
    offset=0 .5cm,
    angle=90,
    pagecommand={\thispagestyle{empty}\vspace*{\fill}\captionof{figure}{PSD projection error on B200 GPU. \label{fig:psd_b200_error}}}
]{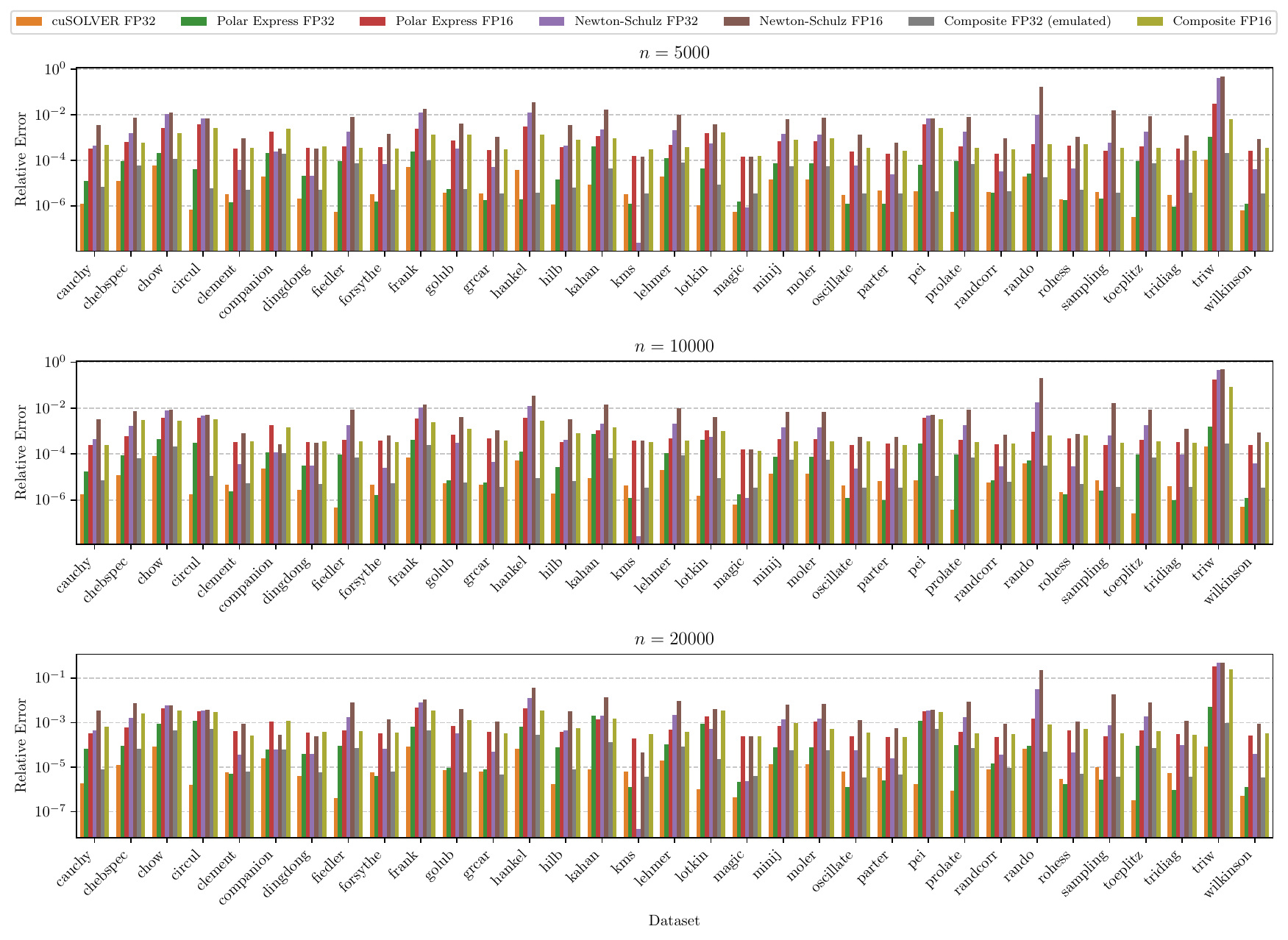}
\includepdf[
    pages=1,
    scale=0.96,
    offset=0 .5cm,
    angle=90,
    pagecommand={\thispagestyle{empty}\vspace*{\fill}\captionof{figure}{PSD projection error on H100 GPU. \label{fig:psd_h100_error}}}
]{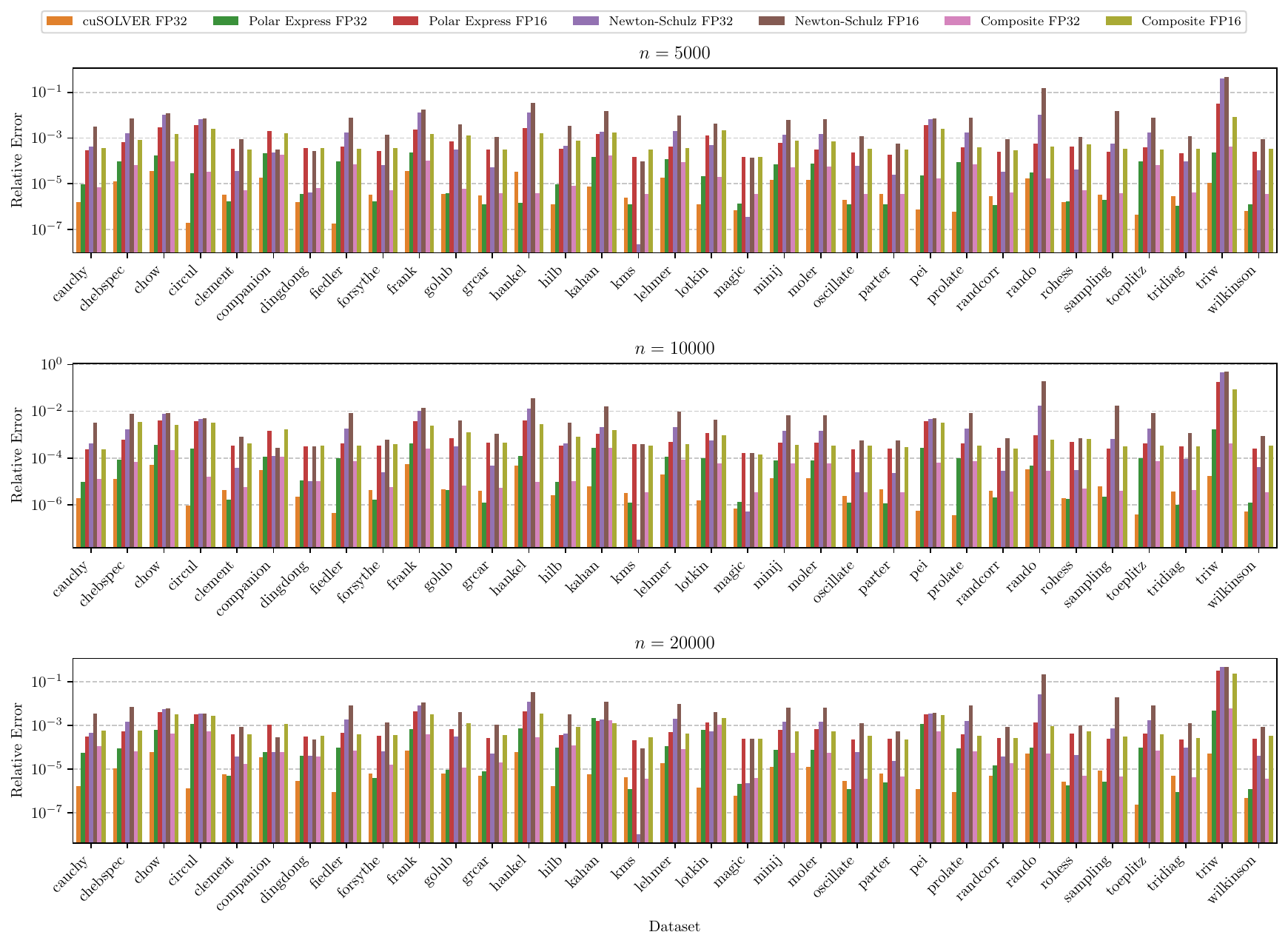}
\restoregeometry

\bibliographystyle{plain}
\bibliography{references/refs.bib,references/myRefs.bib}
\end{document}